\def\bkC{{\rm \kern.24em \vrule width.05em height1.4ex depth-.05ex 
\kern-.26em C}}
\def\C{\bkC}
\def\bksC{{\rm \kern.24em \vrule width.05em height1ex depth-.05ex 
\kern-.26em C}}
\def\bkE{{\rm I\kern-.22em E}}
\def\bkH{{\rm I\kern-.22em H}}
\def\H{\bkH} 
\def\bkN{{\rm I\kern-.17em N}}
\def\NN{\bkN}
\def\bkQ{{\rm \kern.24em \vrule width.05em height1.4ex depth-.05ex 
\kern-.26em Q}}
\def\bkR{{\rm I\kern-.17em R}}
\def\RR{\bkR}
\def\bkZ{{\rm Z\kern-.32em Z}}
\def\bksZ{{\rm Z\kern-.22em Z}}
\newcommand{\Hthree}{\mathbb{H}^3}
\newcommand{\til}[1]{\widetilde{#1}}
\newcommand{\bdry}{\partial}
\def\PSL{PSL_2(\C)}
\def\D{\mathfrak{D}}
\def\PX{\overline{\mathfrak{X}}}
\def\tri{\mathcal{T}}
\theoremstyle{plain}
\newtheorem{thm}{Theorem}
\newtheorem*{thm*}{Theorem}
\newtheorem{lem}[thm]{Lemma}
\newtheorem*{lem*}{Lemma}
\newtheorem{cor}[thm]{Corollary}
\newtheorem*{cor*}{Corollary}
\newtheorem*{cla*}{Claim}
\newtheorem*{con*}{Condition}
\newtheorem*{pro*}{Proposition}
\newtheorem*{rem*}{Remark}
\newtheorem{defn}[thm]{Definition}
\newtheorem*{defn*}{Definition}
\newtheorem{rmk}[thm]{Remark}
\newtheorem*{rmk*}{Remark}
\newtheorem{obs}[thm]{Observation}
\begin{document}
\title{Pseudo-Developing Maps for Ideal Triangulations I:\\
Essential Edges and Generalised Hyperbolic Gluing Equations} 
\author{Henry Segerman and Stephan Tillmann}
\dedication{To Bus Jaco on the occasion of his seventieth birthday}

\begin{abstract}
Let $N$ be a topologically finite, orientable 3--manifold with ideal triangulation. We show that if there is a solution to the hyperbolic gluing equations, then all edges in the triangulation are essential. This result is extended to a generalisation of the hyperbolic gluing equations, which enables the construction of hyperbolic cone-manifold structures on $N$ with singular locus contained in the 1--skeleton of the triangulation.
\end{abstract}
\primaryclass{57M25, 57N10}
\keywords{3--manifold, ideal triangulation, parameter space, Thurston's gluing equations}


\maketitle


\section{Introduction}

Neumann and Zagier \cite{nz} study the variation of the volume function on a cusped hyperbolic 3--manifold of finite volume using a decomposition of the manifold into hyperbolic ideal tetrahedra. This is based on a construction by Thurston \cite{t}, which associates to the underlying topological ideal triangulation a \emph{parameter space} of shapes of ideal hyperbolic tetrahedra satisfying certain polynomial equations. Such a parameter space can be associated to any ideal triangulation of a non-compact 3-manifold with torus cusps, and several authors have studied this affine algebraic set in this generality (see, for instance, Yoshida \cite{y}, Francaviglia \cite{f}, Tillmann \cite{ti}, Segerman \cite{seg}). A key step in many applications is the construction of a so-called pseudo-developing map for a given point on the parameter space in order to produce a representation of the fundamental group of the manifold into the group of orientation preserving isometries of hyperbolic 3--space. The map is called a pseudo-developing map rather than a developing map because it is not necessarily locally injective.

More recently, Luo \cite{luo} initiated the study of the parameter space for arbitrary ideally triangulated manifolds. In Luo's setting, they arise from (possibly semi-simplicial) triangulations of closed 3-manifolds by removing the vertices. This raises the question of whether the existence of pseudo-developing maps, their associated holonomies and continuous extensions depend on topological hypotheses. This note considers the most general setting, and has the following theorem as its core result. The relevant definitions can be found in Sections~\ref{sec:Definitions} and \ref{sec:defo}.

\begin{thm}\label{thm:essential edges}
Let $N$ be a topologically finite, orientable 3--manifold with ideal triangulation. If there is a solution to the hyperbolic gluing equations, then all edges in the triangulation are essential.
\end{thm}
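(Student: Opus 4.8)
The plan is to manufacture a pseudo-developing map from the given solution and then play off its well-definedness against the non-degeneracy of the ideal tetrahedra it produces. Write $z_\sigma \in \C \setminus \{0,1\}$ for the shape parameter that the solution assigns to each tetrahedron $\sigma$. Building on Section~\ref{sec:defo}, the first step is to develop the tetrahedra of the lifted triangulation across their shared faces, using the face-pairing isometries prescribed by the shapes; the hyperbolic gluing (edge) equations are exactly the conditions guaranteeing that this procedure closes up consistently around every edge. The output is a well-defined, $\rho$--equivariant pseudo-developing map $D \colon \til{N} \to \Hthree \cup \bdry \Hthree$ from the universal cover, where $\rho \colon \pi_1(N) \to \Isom^+(\Hthree)$ is the associated holonomy, carrying each tetrahedron of $\til{N}$ onto a hyperbolic ideal tetrahedron of shape $z_\sigma$ and each ideal vertex of $\til{N}$ to a single point of $\bdry \Hthree$. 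The only feature of $D$ that the argument uses is that it is single-valued; it need not be locally injective, which is precisely why it is a pseudo-developing map.

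The geometric input is that the four ideal vertices of a hyperbolic ideal tetrahedron of shape $z_\sigma$ are pairwise distinct on $\bdry \Hthree$ exactly because $z_\sigma \notin \{0,1,\infty\}$: normalising three of them to $0,1,\infty$ places the fourth at $z_\sigma$, so all four differ. Consequently the two endpoints of every edge of every developed tetrahedron are sent by $D$ to distinct points of $\bdry \Hthree$.

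It remains to convert this into a statement about essentiality. Unwinding the definition in Section~\ref{sec:Definitions}, an edge $e$ is inessential precisely when some lift $\til{e} \subset \til{N}$ is a loop, that is, has both endpoints equal to a single ideal vertex $v$ of the lifted triangulation. Suppose, for contradiction, that $e$ is inessential, and fix such a lift $\til{e}$ together with a tetrahedron $\til{\sigma}$ of $\til{N}$ having $\til{e}$ as an edge. Then two of the four corners of $\til{\sigma}$ coincide with the single vertex $v$, so $D$ must send both of them to the one point $D(v) \in \bdry \Hthree$. But these two corners are the endpoints of an edge of the developed tetrahedron $D(\til{\sigma})$, hence distinct by the previous paragraph. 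This contradiction shows that no lift of $e$ is a loop, so every edge is essential.

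The genuine difficulty lies not in the closing distinctness argument, which is short, but in the first step: establishing that $D$ is well-defined on all of $\til{N}$. One must verify that the composite of the face-pairing isometries encountered in a loop around each edge is the identity, and it is exactly here that both the modulus and the argument parts of the hyperbolic gluing equations are consumed. Granting the developing-map machinery of Sections~\ref{sec:Definitions} and \ref{sec:defo}, the only remaining care is bookkeeping: identifying the two coincident corners of $\til{\sigma}$ and confirming that well-definedness of $D$ forces them to the common image $D(v)$.
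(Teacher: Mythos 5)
Your argument is circular at its first step, and the circularity is precisely the issue this theorem exists to resolve. Developing the tetrahedra of $\til{N}$ across shared faces produces a single-valued map $D\colon \til{N} \to \Hthree$ only if every ideal tetrahedron of $\til{N}$ is embedded --- the condition that $\tri$ be \emph{virtually almost non-singular}, which is stated as a standing assumption for Yoshida's construction in Section~\ref{sec: developing}. That condition is not a consequence of the gluing equations; by Lemma~\ref{lem: virt non sing iff essential} it is (up to identifications of vertices only) the very conclusion you are trying to prove. Concretely, if some tetrahedron $\sigma$ of $\til{N}$ has two of its edges identified with each other in $\til{N}$ --- exactly the sort of degeneracy that inessential edges can produce --- then \emph{no} map carrying $\sigma$ onto a non-degenerate hyperbolic ideal tetrahedron can be single-valued, since the two identified edges would have to land on two distinct geodesics. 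The gluing equations cannot rule this out: the product of shape parameters around an edge equalling $1$ guarantees that developing closes up around that edge only once one already knows that the abstract edge neighbourhoods embed in $\til{N}$. So ``granting the developing-map machinery of Sections~\ref{sec:Definitions} and~\ref{sec:defo}'' is not bookkeeping; it is granting, in essence, the theorem. Your closing distinctness argument is sound, but it only re-proves Lemma~\ref{lem:special case}, which handles the case where $\tri$ is already known to be virtually almost non-singular.

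The paper's proof is structured precisely to escape this trap, and the escape is its main content. When $\tri$ is virtually almost non-singular, Lemma~\ref{lem:special case} (whose proof resembles yours) applies. In the complementary case, where the Yoshida map on $\til{N}$ need not exist at all, the paper drills the edges out of the compact core to obtain a handlebody $H$ and works in $\til{H}^*$, built from the universal cover $\til{H}$ by collapsing edge and vertex boundary. Because $\til{H}$ is homotopy equivalent to a tree, every tetrahedron of $\til{H}^*$ is non-singular (Lemma~\ref{lem: non-singular in handlebody cover}), so a pseudo-developing map $D\colon \til{H}^* \to \overline{\H}^3$ and a holonomy $\rho\colon \pi_1(H) \to \PSL$ exist for \emph{arbitrary} shape parameters, with no consistency conditions to verify. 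The gluing equations enter only at the end: they force $\rho$ to be trivial on each loop $\gamma_k$ encircling an edge, so the disc relation $[\alpha\beta^{-1}] = \prod[\delta_k\gamma_k\delta_k^{-1}]$, obtained from a homotopy into $\bdry C$ of a path $\alpha$ parallel to an inessential edge, forces $\rho([\alpha\beta^{-1}])=1$, contradicting the fact that this isometry must carry $D\til{\alpha}^*(0)$ to the distinct point $D\til{\alpha}^*(1)$. To repair your proof you would need an argument of this kind, producing a holonomy representation without first assuming that the tetrahedra of $\til{N}$ are embedded.
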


\begin{cor}\label{cor:essential edges}
Let $N$ be a topologically finite, orientable 3--manifold with ideal triangulation. If there is a solution to the hyperbolic gluing equations, then associated to it are a pseudo-developing map $\widetilde{N}\to \H^3$ and a representation of $\pi_1(N)$ into $\PSL,$ which makes this map equivariant.
\end{cor}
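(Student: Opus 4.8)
The plan is to prove Corollary~\ref{cor:essential edges} by assembling a pseudo-developing map tetrahedron-by-tetrahedron, using the shape parameters furnished by the solution to the hyperbolic gluing equations, and then verifying that the edge-product (gluing) equations force these local pieces to fit together consistently.

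First I would recall that a solution to the hyperbolic gluing equations assigns to each ideal tetrahedron of the triangulation a shape parameter $z\in\C\setminus\{0,1\}$, which determines an isometric embedding of that tetrahedron as an ideal hyperbolic tetrahedron in $\H^3$, well-defined up to the action of $\PSL$. The strategy is to work in the universal cover $\widetilde{N}$: lift the triangulation of $N$ to a triangulation of $\widetilde{N}$, fix one tetrahedron $\til\simplex_0$ and place it in $\H^3$ via its shape, and then extend across each shared face by the unique orientation-preserving isometry of $\H^3$ matching up the two ideal triangles. Carrying this out along a path of tetrahedra gives a candidate map $D\colon\widetilde{N}\to\H^3$ (a priori only defined on the complement of the edges, or as a map on the $\til\simplex$'s that may fail to be continuous across edges). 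Here Theorem~\ref{thm:essential edges} does the essential work: because all edges are essential, the combinatorial structure around each edge is that of a genuine cycle of tetrahedra, and the edge-product equations --- which assert that the product of the shape parameters around each edge is $1$ with total dihedral angle $2\pi$ --- guarantee that developing around any edge loop returns to the starting position. This is exactly the local holonomy-triviality needed for $D$ to be well-defined and continuous on all of $\widetilde{N}$.

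Next I would extract the holonomy representation. For each deck transformation $\gamma\in\pi_1(N)$, comparing the development of a tetrahedron with the development of its $\gamma$-translate produces an isometry $\rho(\gamma)\in\PSL$ satisfying $D\circ\gamma=\rho(\gamma)\circ D$; the cocycle identity $\rho(\gamma_1\gamma_2)=\rho(\gamma_1)\rho(\gamma_2)$ follows because the developing map is defined by analytic continuation and is independent of the path of tetrahedra chosen, which is precisely where essentiality of the edges (so that edge loops are null-effect) and the gluing equations are used. This gives the desired $\rho\colon\pi_1(N)\to\PSL$ making $D$ equivariant.

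The main obstacle I anticipate is establishing that the face-pairing construction is \emph{consistent}, i.e.\ that the isometry obtained by developing around a closed loop of tetrahedra is the identity, independent of the chosen path. Near an edge this reduces to the edge-product equation, but one must confirm that essentiality (Theorem~\ref{thm:essential edges}) rules out the degenerate configurations --- such as an edge both of whose endpoints are identified, or a non-embedded link of an edge --- under which the local cycle of tetrahedra would not close up and the naive analytic continuation would be ill-defined or would pick up nontrivial holonomy. Once this local consistency is secured, the global statement follows by the standard argument that $\widetilde{N}$ is simply connected, so every loop bounds and the developing map extends uniquely; the equivariance and the homomorphism property are then formal consequences of the uniqueness of analytic continuation.
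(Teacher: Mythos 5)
Your proposal is correct and takes essentially the same route as the paper: invoke Theorem~\ref{thm:essential edges} to get essentiality of all edges, which rules out the degenerate (non-embedded) configurations — in the paper's language, the triangulation is virtually non-singular by Lemma~\ref{lem: virt non sing iff essential} — so that Yoshida's face-by-face developing construction applies, with the gluing equations giving consistency around edges, simple connectivity of $\widetilde{N}$ giving global well-definedness, and the holonomy representation $\rho\co\pi_1(N)\to\PSL$ then defined by comparing a tetrahedron's development with that of its $\gamma$-translate. One minor slip worth noting: the gluing equations assert only that the product of shape parameters around each edge equals $1$, not that the total dihedral angle is $2\pi$ (it may be any multiple of $2\pi$, which is exactly why the map is only a \emph{pseudo}-developing map); your argument uses only the product condition, so this does not affect the proof.
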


Luo \cite{luo} also introduced \emph{generalised hyperbolic gluing equations}, which state that the product of all shape parameters around an ideal edge is either $+1$ or $-1.$ The sign may be different at different edges. This can be generalised further by choosing, for each edge $e\in N^{(1)},$ an element $\xi_e \in S^1=\{z\in\C \mid |z|=1\}$ and requiring the product of all shape parameters around $e$ to equal $\xi_e.$ The restriction of $\xi_e$ to $S^1$ is natural: if the shape parameters of all tetrahedra have positive imaginary parts, then this gives a (possibly incomplete) hyperbolic cone manifold structure on $N$ with cone angle $\arg(\xi_e)+2\pi k_e$ (for some $k_e \in \NN$) around $e.$ The most interesting case arises when each $\xi_e$ is a root of unity, but we will consider the general case throughout.
These ``$S^1$--valued" gluing equations are determined by the vector $\xi=(\xi_e)=(\xi_e)_{e\in N^{(1)}},$ and are called the \emph{$\xi$--hyperbolic gluing equations}, since they depend on the choice of $\xi.$ Let $o=o(\xi)=(o(\xi_e))_{e\in N^{(1)}},$ where $o(\xi_e) \in \NN \cup \{\infty\}$ is the order of $\xi_e.$  

If one drills out the edges in $N,$ one obtains a handlebody $H$ with a natural epimorphism $\pi_1(H)\twoheadrightarrow\pi_1(N).$ We obtain a representation $\rho\co \pi_1(H) \to \PSL$ for each solution to the $\xi$--hyperbolic gluing equations. If each order $o(\xi_e)$ is finite, then we obtain a natural branched (not necessarily finite) cover $N_o$ of $N$ corresponding to the kernel of $\rho.$ The branch locus is contained in the 1--skeleton, and the cover has branch index $o(\xi_e)$ at $e.$ If some order is infinite, one can still define $N_o,$ but this will have an edge of infinite degree and the points on such an edge of infinite degree are not manifold points. If all orders are equal to $1,$ then $N=N_o.$  Theorem~\ref{thm:essential edges} is thus a special case of the following result:

\begin{thm}\label{thm:branched cover}
Let $N$ be a topologically finite, orientable 3--manifold with ideal triangulation. If there is a solution to the $\xi$--hyperbolic gluing equations for $N,$ then all edges in the induced ideal triangulation of $N_o$ are essential.
\end{thm}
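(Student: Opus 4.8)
The plan is to deduce essentiality from the existence of a pseudo-developing map that is well defined at the ideal vertices, exploiting the fact that a non-degenerate ideal tetrahedron has four \emph{distinct} ideal vertices. Recall that an edge $e$ of the induced triangulation of $N_o$ is essential precisely when no lift $\til e$ to the universal cover $\til{N_o}$ has its two ideal endpoints coinciding as vertices of $\til{N_o}$. So it suffices to produce an equivariant pseudo-developing map $D\co \til{N_o}\to \overline{\H^3}$ with two properties: (i) it sends each ideal vertex of $\til{N_o}$ to a single well-defined point of $\bdry \H^3$, and (ii) it sends each tetrahedron to a non-degenerate ideal hyperbolic tetrahedron. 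Granting these, if some edge $e$ were non-essential, a lift $\til e$ would run from an ideal vertex $\til v$ back to $\til v$; but $\til e$ is an edge of a developed tetrahedron, whose two endpoints map to \emph{distinct} points of $\bdry\H^3$ by (ii), contradicting the fact that both ends equal $\til v$ and hence share the single image $D(\til v)$.

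First I would build the developing map tetrahedron by tetrahedron. A solution to the $\xi$--hyperbolic gluing equations assigns to every tetrahedron a shape parameter in $\C\setminus\{0,1\}$, which determines a non-degenerate ideal tetrahedron in $\H^3$, and developing across a shared face is prescribed by the unique orientation-preserving isometry matching the two developed ideal triangles. This yields a map on the universal cover $\til H$ of the drilled handlebody $H$, equivariant with respect to the representation $\rho\co\pi_1(H)\to\PSL$ of the excerpt. The point of passing to the branched cover $N_o$ is that the holonomy of $\rho$ around an edge $e$ is a rotation through $\arg(\xi_e)$ about the geodesic developed from $e$; since $\xi_e$ has order $o(\xi_e)$, this rotation becomes trivial after $o(\xi_e)$ turns, so on $N_o$ --- where $e$ has branch index $o(\xi_e)$ --- the developing map closes up around each edge and extends continuously across the $1$--skeleton. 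Unwinding, $D$ is defined, single-valued on $\til{N_o}$ away from its ideal vertices, and equivariant under the induced action of $\pi_1(N_o)$, which gives property (ii) at once.

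The crux, and the step I expect to be the main obstacle, is property (i): that $D$ extends continuously over each ideal vertex with a well-defined image in $\bdry\H^3$. Fixing an ideal vertex $\til v$, the incident tetrahedra cut out a triangulated copy of its link, and developing these cross-sections produces a similarity developing map of the link surface, which is closed because $N_o$ is topologically finite. Here the $S^1$--valued normalisation $|\xi_e|=1$ is decisive: the scaling factor of the link holonomy around a vertex of the link --- corresponding to an edge of $N_o$ through $\til v$ --- is exactly $|\prod z_i| = |\xi_e| = 1$, so this holonomy is a pure rotation of $\C$ and the link carries a genuine Euclidean cone structure rather than a properly similar one. I would use this, together with the branched-cover normalisation identifying the cone angles, to show that the link developing map limits, as one approaches $\til v$, to a single point of $\bdry\H^3$ independent of the incident tetrahedron, ruling out any spiralling toward distinct limits; this point is then declared to be $D(\til v)$.

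With both properties in hand, the reduction of the first paragraph applies: every edge of the induced triangulation of $N_o$ has its two ends developed to distinct ideal points, so those ends are distinct vertices of $\til{N_o}$, and the edge is essential. Theorem~\ref{thm:essential edges}, being the special case $\xi\equiv 1$ in which $N_o=N$, then follows immediately.
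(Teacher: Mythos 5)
Your overall strategy is viable, and its endgame genuinely differs from the paper's. The paper, after setting up the same handlebody developing map $D\colon\til{H}^*\to\overline{\H}^3$ and representation $\rho\colon\pi_1(H)\to\PSL$, derives the contradiction algebraically: an inessential edge gives a disc in the compact core meeting the edge neighbourhoods in meridians, hence a relation $[\alpha\beta^{-1}]=\prod[\delta_k\gamma_k\delta_k^{-1}]$ in $\pi_1(H)$ whose right-hand side is killed by $\rho$ (each $\gamma_k$ has holonomy $\xi_e^{o_e}=1$), while the left-hand side must move one developed ideal endpoint of the edge to the other. You instead push $D$ down to $P_o=\til{H}^*/\ker(\rho)$ --- which is exactly what $\rho$--equivariance buys, since $\rho(\ker\rho)=1$ --- and then run on $N_o$ the argument the paper uses only in the virtually almost non-singular case (Lemma~\ref{lem:special case}): coinciding ideal endpoints of a lifted edge contradict non-degeneracy of the developed tetrahedra. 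This descent route can be made to work, and it is essentially an extension of Lemma~\ref{lem:special case} made possible by developing on the handlebody cover rather than on $\til{N_o}$.

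There are, however, two genuine problems with the execution. First, the step you single out as the crux is handled with a false mechanism. The claim that $|\xi_e|=1$ forces the vertex links to carry ``a genuine Euclidean cone structure rather than a properly similar one'' is wrong: the condition $|\xi_e|=1$ (even $\xi_e=1$) controls only the derivative of the holonomy around the cone points of the link, not the holonomy of loops in the underlying link surface. Generic solutions of the ordinary gluing equations ($\xi\equiv 1$) are incomplete, and their vertex links are strictly similarity (spiralling) surfaces --- the deformation variety of the figure-eight knot complement already shows this. Fortunately, the statement you actually need is true for a purely combinatorial reason that requires no metric input: the link of an ideal vertex is connected, adjacent corners in the link are glued across faces incident with that vertex, and developing across a face fixes the developed images of the three ideal vertices of that face; hence all corners develop the vertex to the same point, spiralling or not. (This is also why incomplete cusps have perfectly well-defined ideal vertex images.) Second, you silently assume the real crux: that a development on $\til{H}$, with \emph{every} cell mapped to a non-degenerate ideal tetrahedron, exists at all. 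This is precisely the point that breaks the circularity --- one cannot run Yoshida's construction on $\til{N_o}$ directly, because a tetrahedron with self-identified edges admits no such map, and absence of self-identifications is nearly the statement being proven. It requires the tree/leaf argument that all tetrahedra in $\til{H}^*$ are non-singular (the paper's Lemma~\ref{lem: non-singular in handlebody cover}), which your proposal should prove or invoke. With that lemma in place and your ``crux'' replaced by the combinatorial observation above, your argument goes through; note also that the descent to $P_o$ needs no finiteness of the orders $o(\xi_e)$, since edges of infinite degree impose no closing-up condition, so this route handles that case uniformly, whereas your closing-up argument as phrased assumes finite orders.
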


Treating each $\xi_e$ as an additional, circle-valued variable gives the \emph{cone-hyperbolic gluing equations}. The existence of interesting solutions to them comes from the following observation:

\begin{obs}\label{cor:existence}
Given any ideally triangulated, topologically finite, orientable 3--manifold $N,$ there is a complete, non-compact hyperbolic cone-manifold structure on $N$ with singular locus contained in $N^{(1)}$ and having volume exactly the number of tetrahedra in $N$ times the volume of the regular hyperbolic ideal tetrahedron.
\end{obs}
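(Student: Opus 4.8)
The plan is to exhibit an explicit solution to the cone-hyperbolic gluing equations by assigning the \emph{regular} ideal hyperbolic tetrahedron to every tetrahedron of the triangulation and then reading off the resulting vector $\xi$. Concretely, I would give every shape parameter the value $z=e^{i\pi/3}$; a direct computation shows that the three shape parameters $z,\ 1/(1-z),\ (z-1)/z$ at each ideal vertex all equal $e^{i\pi/3}$, so each lies on $S^1$ and has positive imaginary part, and the corresponding ideal tetrahedron is the regular one, of volume $v_3$ (the volume of the regular ideal tetrahedron). For each edge $e\in N^{(1)}$ define $\xi_e$ to be the product of the shape parameters arranged around $e$; since each factor lies on $S^1$, we have $\xi_e\in S^1$. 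Thus, by construction, this regular assignment is a solution to the $\xi$--hyperbolic gluing equations for the vector $\xi=(\xi_e)$, equivalently a solution to the cone-hyperbolic gluing equations in which the $\xi_e$ are treated as variables. The point is that there is no constraint to overcome here: the regular assignment always solves these equations for the $\xi$ it defines.

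Because all shape parameters have positive imaginary part, the discussion preceding Theorem~\ref{thm:branched cover}, together with Corollary~\ref{cor:essential edges}, produces from this solution a hyperbolic cone-manifold structure on $N$ whose singular locus is contained in $N^{(1)}$ and whose cone angle around each edge $e$ is $\arg(\xi_e)+2\pi k_e$. Volume is additive over the tetrahedra, which are glued along totally geodesic ideal faces; since each of the $t$ tetrahedra is regular and positively oriented, each contributes exactly $v_3>0$, and the total volume is $t\cdot v_3$, as required. Non-compactness is immediate, since an ideal triangulation of a topologically finite manifold has at least one ideal vertex and hence at least one end.

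The substantive point to establish carefully is \emph{completeness}. Each end of $N$ has a compact cross-section $S$ inheriting a similarity structure whose developing triangles are the Euclidean vertex-link triangles of the ideal tetrahedra; for the regular tetrahedron these are all equilateral. The derivative (linear part) of the holonomy of any loop in $S$ is a product of the shape parameters and their inverses, hence a product of elements of $S^1$, hence of modulus $1$. I would therefore argue that the linear part of the holonomy of $S$ takes values in the rotation subgroup $S^1\subset\mathbb{C}^{*}$, so that the similarity structure on $S$ is in fact a genuine closed Euclidean cone surface, with cone points exactly where the singular edges meet $S$ and cone angle $\arg(\xi_e)+2\pi k_e$ there. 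Consequently each end is a genuine cusp $S\times[0,\infty)$ carrying the standard cusp metric, and the cone-manifold structure on $N$ is complete.

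The step I expect to require the most care is this last one: passing from ``all holonomy derivatives have modulus $1$'' to ``the cross-section is a closed Euclidean cone surface and the associated end is a complete cusp.'' This is precisely where the modulus-$1$ property of the regular tetrahedron's shape parameters is essential, and it is exactly the feature that fails for a generic incomplete solution of the ordinary gluing equations (where a loxodromic peripheral holonomy yields $|H(\gamma)|\neq 1$ and an incomplete end). I would handle it via the standard fact that a similarity structure on a compact surface with isometric (modulus-$1$) linear holonomy admits a compatible flat cone metric, together with the observation that positivity of all imaginary parts keeps the developing map nondegenerate on each triangle, so that the cone surface is genuinely Euclidean away from the cone points.
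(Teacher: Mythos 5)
Your proof is correct and is essentially the paper's own argument: assign the regular ideal tetrahedron ($z=e^{i\pi/3}$) to every simplex, note that all shape parameters and hence all edge products lie in $S^1$, so the face gluings produce a hyperbolic cone-manifold structure with cone angle $\deg(e)\cdot\pi/3$ at each edge and volume $t\cdot v_3$, and prove completeness by showing each vertex link carries a genuine Euclidean cone metric --- your similarity-structure argument is exactly a fleshed-out version of the paper's ``one can consistently choose horospherical triangles that match up.'' One small blemish: Corollary~\ref{cor:essential edges} concerns solutions of the ordinary hyperbolic gluing equations (all $\xi_e=1$) and is neither applicable nor needed here; the cone-manifold structure is obtained directly by gluing the regular ideal tetrahedra along their faces, as in the paper's proof and as your own completeness argument already implicitly does.
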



\textbf{Outline:} Triangulations, pseudo-manifolds and essential edges are discussed in Section~\ref{sec:Definitions}. The deformation variety and its generalisations are defined in Section~\ref{sec:defo}, and Yoshida's construction is recalled. The handlebody construction as well as the proofs of the main results can be found in Section~\ref{sec:handlebody}. Examples are given in Section~\ref{sec:zoo}.


\textbf{Acknowledgements:} The authors thank the referee for useful comments. The second author thanks Feng Luo for helpful discussions. This work was supported under the Australian Research Council's Discovery funding scheme (project number DP1095760).


\section{Triangulations with essential edges}
\label{sec:Definitions}


\subsection{Ideal triangulation}

An ideal triangulation $\tri$ of the topologically finite 3-manifold $N$ consists of a pairwise disjoint union of standard Euclidean 3--simplices, $\widetilde{\Delta} = \cup_{k=1}^{n} \widetilde{\Delta}_k,$ together with a collection $\Phi$ of Euclidean isometries between the 2--simplices in $\widetilde{\Delta},$ termed \emph{face pairings}, such that $N = (\widetilde{\Delta} \setminus \widetilde{\Delta}^{(0)} )/ \Phi.$ It is well-known that every non-compact, topologically finite 3--manifold admits an ideal triangulation.

Denote $P = \widetilde{\Delta} / \Phi$ the associated \emph{pseudo-manifold} (or \emph{end-compactification} of $N$) with quotient map $p\co \widetilde{\Delta} \to P.$ Let $\sigma$ be a $k$--simplex in $\widetilde{\Delta}.$ Then $p(\sigma)$ may be a singular $k$--simplex in $P.$ Denote by $P^{(k)}$ the set of all (possibly singular) $k$--simplices in $P.$ An \emph{ideal $k$--simplex} is a $k$--simplex with its vertices removed. The vertices of the $k$--simplex are termed the \emph{ideal vertices} of the ideal $k$--simplex. Similarly for singular simplices. The standard terminology of (ideal) edges, (ideal) faces and (ideal) tetrahedra will be used for the singular simplices in $N$ and $P.$

In this note, it is assumed throughout that $N$ is oriented and that all singular simplices in $N$ are given the induced orientations. It follows that the link of each vertex in $P$ is an orientable surface. 

The case in which each vertex link is a torus and $N$ has a complete, hyperbolic structure supported by the ideal triangulation is the most common setting for the study of Thurston's hyperbolic gluing equations, see \cite{ac, ch, f, nr, nz, y}. The case in which each vertex link is a sphere and $P$ is a closed hyperbolic 3--manifold is treated in \cite{LTY}. This note will not make any of these additional assumption.

It is often convenient to start with the 3--dimensional, closed, orientable pseudo-manifold $P$ with (possibly singular) triangulation $\tri.$ Then $N=P \setminus P^{(0)}$ is an ideally triangulated, non-compact, orientable, topologically finite 3--manifold. 


\subsection{Abstract edge-neighbourhood}

The \emph{degree of an edge} $e$ in $P,$ $\deg(e),$ is the number of 1--simplices in $\widetilde{\Delta}$ which map to $e.$ Given the edge $e$ in $P,$ there is an associated \emph{abstract neighbourhood $B(e)$} of $e.$ This is a ball triangulated by $\deg (e)$ 3--simplices, having a unique interior edge $\widetilde{e},$ and there is a well-defined simplicial quotient map $p_{e}\co B(e)\to P$ taking $\widetilde{e}$ to $e.$ This abstract neighbourhood is obtained as follows.

If $e$ has at most one pre-image in each 3--simplex in $\widetilde{\Delta},$ then $B(e)$ is obtained as the quotient of the collection $\widetilde{\Delta}_{e}$ of all 3--simplices in $\widetilde{\Delta}$ containing a pre-image of $e$ by the set $\Phi_{e}$ of all face pairings in $\Phi$ between faces containing a pre-image of $e.$ There is an obvious quotient map $b_{e}\co B(e)\to P$ which takes into account the remaining identifications on the boundary of $B(e).$

If $e$ has more than one pre-image in some 3--simplex, then multiple copies of this simplex are taken, one for each pre-image. The construction is modified accordingly, so that $B(e)$ again has a unique interior edge and there is a well defined quotient map $b_{e}\co B(e)\to P.$ Complete details can be found in \cite{tillus_normal}, Section 2.3.


\subsection{End-compactification and null-homotopic edges}
\label{sec:Null-homotopic edges}

As above, let $P$ be a 3--dimensional, closed, orientable pseudo-manifold with (possibly singular) triangulation $\tri,$ 
and $N=P \setminus P^{(0)}.$ Let $E = \nu(P^{(0)})$ be an open regular neighbourhood of $P^{(0)}$ in $P,$ chosen in such a way that $\partial E$ meets each singular 3--simplex $\sigma^3$ in $P$ in precisely four pairwise disjoint normal triangles, one at each of its corners. Hence $E\cup \partial E$ has the natural cone structure 
$(\partial E \times [0,1]) / \sim$ where $(x,1)\sim(y,1)$ if $x,y$ lie on a connected component of $\partial E.$

Then $C = P \setminus E$ is termed a \emph{compact core} of $N.$ It follows that $P$ can be viewed as obtained from $C$ by taking each connected component $B$ of $\partial C$ and either collapsing it to a point or by attaching the cone over $B$ to a point. Neumann and Yang \cite{ny} call this the \emph{end-compactification} of $N.$ We have 
$$C \subset N \subset P.$$
Let $\widetilde{P}$ be the space obtained from the universal cover $\widetilde{C}$ of $C$ by attaching the cone over each connected boundary component to a point. We then have natural inclusions
$$\widetilde{C} \subset \widetilde{N} \subset \widetilde{P},$$
and $\widetilde{P}$ is termed the \emph{end-compactification of $\widetilde{N}$ with respect to $N.$} Note that $\widetilde{P}$ is also simply connected since adding cones over connected spaces does not increase the fundamental group.

It is hoped that the notation and terminology does not lead to confusion. For instance, when $N$ is hyperbolic, then $\widetilde{N}$ is an open ball and the natural compactification of this open ball (without reference to $N$) is homeomorphic to the 3--ball, whilst $\widetilde{P}$ is $\widetilde{N}$ with countably many points added. Also, there are many examples where $P$ is simply connected; for instance if $C$ is the complement of a knot or link in $S^3.$

The space $\widetilde{P}$ has a natural decomposition into 3--simplices coming from the decomposition of $\widetilde{C}$ into truncated 3--simplices and the coning construction. Lifting the ideal triangulation of $N$ to $\widetilde{N}$ gives a natural decomposition into ideal 3--simplices. It follows from the construction that the ideal triangulation of $\widetilde{N}$ is precisely the restriction of the triangulation of $\widetilde{P}$ to $\widetilde{P}\setminus\widetilde{P}^{(0)}.$ In particular, we have a well-defined simplicial map $\widetilde{P} \to P.$

A triangulation $\tri$ of $P$ is said to be \emph{almost non-singular} if no 3--simplex has two of its edges identified. In this case, the only self-identifications of a 3--simplex are at the vertices. A triangulation $\tri$ of $P$ is \emph{non-singular} if no 3--simplex has any self-identifications. 

A triangulation $\tri$ of $P$ is said to be \emph{virtually almost non-singular} if the induced triangulation of $\til{P}$ is almost non-singular, and \emph{virtually non-singular} if the induced triangulation of $\til{P}$ is non-singular.

Note that a triangulation $\tri$ of $P$ is virtually almost non-singular if and only if every ideal 3--simplex in $\widetilde{N}$ is embedded.

An edge $e \in P^{(1)}$ is \emph{null-homotopic} if and only if there is a map $f \co D^2 \to P$ such that $f(\partial D^2) = e.$ 
Equivalently, $e$ is null-homotopic in $P$ if and only if it represents the trivial element in $\pi_1(P).$ 

The intersection $\alpha = e\cap C$ is \emph{homotopic into $\partial C$} if and only if there is an arc $\beta \subset \partial C$ such that $\alpha$ is homotopic to $\beta$ by a fixed-endpoint homotopy.


To simplify terminology, we will say that the edge $e$ in $P$ is \emph{essential} if $e\cap C$ is not homotopic into $\partial C,$ and it is \emph{not essential} otherwise. This reflects standard terminology for ideal edges in ideal triangulations.
\begin{lem}\label{lem: virt non sing iff essential}
$\tri$ is virtually non-singular if and only if every edge in $P$ is essential.
\end{lem}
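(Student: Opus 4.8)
The plan is to translate both sides of the equivalence into a single combinatorial condition on the edges of the induced triangulation of $\til{P}$, namely that every edge joins two \emph{distinct} vertices. On the left-hand side this will be essentially the definition of non-singularity; on the right-hand side it will be an unwrapped reformulation of essentiality.

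First I would reduce virtual non-singularity to a statement about vertices. Each 3--simplex of the induced triangulation of $\til{P}$ is the image of a standard simplex under an affine characteristic map, and such a map is injective if and only if it is injective on the four vertices. Hence a 3--simplex of $\til{P}$ has a self-identification precisely when two of its vertices are identified, and since any two vertices of a 3--simplex are the endpoints of one of its edges (and every edge of $\til{P}$ occurs in some 3--simplex), the triangulation of $\til{P}$ is non-singular if and only if every edge of $\til{P}$ has two distinct endpoints. This disposes of the ``edge'' and ``face'' self-identifications for free, as each of these forces a coincidence of vertices.

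The heart of the argument is the claim that an edge $e\in P^{(1)}$ is essential if and only if every edge of $\til{P}$ lying over $e$ has distinct endpoints. To see this I would work with the arc $\alpha = e\cap C$, whose endpoints lie on $\bdry C,$ and a lift $\til{\alpha}\subset \til{C}$ with endpoints $\til{a}_0,\til{a}_1$ on boundary components $\til{B}_0,\til{B}_1$ of $\til{C}.$ Because $\til{P}$ is obtained from $\til{C}$ by coning each boundary component to a point, the corresponding lift $\til{e}$ of $e$ has its two endpoints equal if and only if $\til{B}_0=\til{B}_1.$ It then remains to show that $\til{B}_0=\til{B}_1$ if and only if $e$ is not essential: if $\alpha$ is homotopic rel endpoints into an arc $\beta\subset\bdry C,$ then lifting this homotopy keeps endpoints fixed and places $\til{a}_0,\til{a}_1$ on a common lift of the boundary component containing $\beta$; conversely, if $\til{a}_0,\til{a}_1$ lie on the same component $\til{B}_0,$ then an arc in $\til{B}_0$ joining them is homotopic rel endpoints to $\til{\alpha}$ because $\til{C}$ is simply connected, and projecting this homotopy exhibits $\alpha$ as homotopic into $\bdry C.$ Since the definition of essentiality refers only to $\alpha$ downstairs, the outcome is independent of the chosen lift.

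Combining the two reductions then finishes the proof: $\tri$ is virtually non-singular if and only if every edge of $\til{P}$ has distinct endpoints, if and only if every edge of $P$ is essential. I expect the main obstacle to be the second step, that is, pinning down the essentiality criterion, where one must be careful to use fixed-endpoint (rather than free) homotopies, to exploit the simple connectivity of $\til{C}$ in the correct direction, and to match the cone structure of $\til{P}$ with the boundary components of $\til{C};$ the vertex-reduction step and the final combination should then be routine.
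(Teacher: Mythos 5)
Your proposal is correct and takes essentially the same approach as the paper: both arguments identify self-identifications of tetrahedra in $\widetilde{P}$ with edges having both endpoints at a single vertex (equivalently, arcs in $\widetilde{C}$ with both endpoints on one boundary component), and both use homotopy lifting together with the simple connectivity of $\widetilde{C}$ to translate this into non-essentiality of the corresponding edge of $P$. The only difference is organizational: you factor both sides through the intermediate condition that every edge of $\widetilde{P}$ has distinct endpoints, whereas the paper proves the two contrapositive implications directly.
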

\begin{proof}
Suppose the edge $e$ is not essential. Then $e\cap C$ is homotopic into $\partial C.$ The disc in $C$ lifts to a disc in $\widetilde{C}$ with part of its boundary on a lift $\tilde{e}$ of $e$ and the remainder of its boundary on a connected component of $\partial \widetilde{C}.$ Hence $\tilde{e}$ is an edge in $\widetilde{P}$ with both end-points at the same vertex. Thus, any tetrahedron containing $\tilde{e}$ is not embedded and therefore $\tri$ is not virtually non-singular.

Conversely, suppose $\tri$ is not virtually non-singular. There is a tetrahedron in $\widetilde{P}^{(3)}$ with self-identifications and hence an edge $e$ with both end-points at the same vertex. The end-points of $\alpha = e \cap \widetilde{C}$ lie on the same boundary component of $\widetilde{C},$ and hence can be connected by an arc $\beta \subset \partial \widetilde{C}.$ Since $\widetilde{C}$ is simply connected, the loop $\alpha \cup \beta$ bounds an immersed disc in $\widetilde{C}.$ Let $p_C \co \widetilde{C} \to C$ be the covering map. It follows that $p_C(\alpha)$ is homotopic to $p_C(\beta),$ and hence the edge containing $p_C(\alpha)$ is not essential.
\end{proof}

\begin{lem}\label{not_ess_implies_null_htpc}
If $e$ is not essential in $P,$ then $e$ is \emph{null-homotopic} in $P.$
\end{lem}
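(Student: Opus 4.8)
The plan is to unwind the definitions and reduce the statement to the topology of the compact core. By definition, $e$ being not essential means that $\alpha = e \cap C$ is homotopic, by a fixed-endpoint homotopy, into an arc $\beta \subset \partial C$. The goal is to produce a map $f\co D^2 \to P$ with $f(\partial D^2) = e$, i.e.\ to show $e$ bounds a disc in $P$. The first step is to assemble the edge $e$ in $P$ out of the core arc $\alpha$ together with the two cone segments running from the endpoints of $\alpha$ out to the ideal vertices (points of $P^{(0)}$) in the attached cones. Since $E \cup \partial E$ has the cone structure $(\partial E \times [0,1])/\sim$ described in the excerpt, each such cone segment is literally a radial arc of a cone, hence null-homotopic rel its endpoint on $\partial C$.

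The key step is then to build the spanning disc by concatenating three pieces. First, the fixed-endpoint homotopy from $\alpha$ to $\beta$ gives a singular disc $\Delta_0$ in $C$ whose boundary is $\alpha \cup \beta$ (traversing $\alpha$ one way and $\beta^{-1}$ back). Second, because the two endpoints of $\alpha$ are the two endpoints of $e$, and in $P$ these lie on (one or two) components of $\partial C$ that get collapsed/coned to points of $P^{(0)}$, I would cap off the boundary arc $\beta$ together with the two radial cone arcs. Concretely: the two cone segments $c_0, c_1$ (from the endpoints of $\alpha$ to the relevant ideal vertices) close up $e$ as $e = c_0 \cdot \alpha \cdot c_1^{-1}$ up to the identification happening in the cone, and the arc $\beta \subset \partial C$ together with the cone over $\beta$ provides a disc in the attached cone filling in $\beta \cup c_0 \cup c_1$. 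Here one uses that $\partial E$ is coned to a point, so any arc in a single boundary component bounds a disc in the cone. Gluing $\Delta_0$ to this coning disc along $\beta$ produces a map of a disc into $P$ whose boundary is exactly $e$.

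The main obstacle I expect is the bookkeeping at the endpoints, specifically whether the two endpoints of $e$ lie on the same vertex of $P^{(0)}$ or on two distinct vertices, since this controls whether the cone-filling is a single disc or must be handled in two cone neighbourhoods. If the endpoints lie at distinct vertices, one cannot fill $\beta$ inside a single cone; however, in that situation I expect that essentiality and null-homotopy must be reconciled more carefully, and the cleanest route is to observe that the homotopy-into-$\partial C$ hypothesis already forces the two endpoints of $\alpha$ into the \emph{same} component of $\partial C$ (since $\beta$ is a single arc in $\partial C$ with the same endpoints as $\alpha$), which collapses to a single vertex of $P^{(0)}$. Thus the construction stays within one cone and the potential obstruction dissolves.

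An alternative, and perhaps more streamlined, approach is to pass directly to $\pi_1(P)$ using the inclusion $C \hookrightarrow P$. Since $e$ is a loop in $P$ based at a vertex, and the cone structure shows that collapsing each boundary component of $C$ to a point kills the image of $\pi_1(\partial C)$, the class $[e] \in \pi_1(P)$ is represented by $[\alpha]$ rel endpoints; the hypothesis that $\alpha$ is homotopic to $\beta \subset \partial C$ shows $[e]$ factors through the image of $\pi_1(\partial C)$ in $\pi_1(P)$, which is trivial in the end-compactification. Either way the conclusion is that $[e]$ is trivial in $\pi_1(P)$, which by the stated equivalence ($e$ null-homotopic iff $e$ represents the trivial element of $\pi_1(P)$) is precisely the claim.
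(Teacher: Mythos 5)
Your proposal is correct and takes essentially the same approach as the paper: the key observation that the fixed-endpoint homotopy forces both endpoints of $\alpha$ onto a single boundary component $B$ of $\partial C$ (hence both ends of $e$ at a single vertex of $P^{(0)}$), followed by gluing the homotopy disc in $C$ to the cone over $\beta$, is precisely the paper's argument, which extends the map $f\co D^2 \to C$ to $f_P\co D^2 \to P$ by coning over $\beta$ to the vertex corresponding to $B$. Your alternative $\pi_1$-based phrasing at the end is just the same construction expressed in fundamental-group language.
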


\begin{proof}
If $\alpha = e\cap C$ is homotopic into $\partial  C,$ then both of its end-points lie on the same boundary component, $B,$ of $C.$ Hence the map $f \co D^2 \to C$ can be extended to a continuous map $f_P \co D^2 \to P$ by coning it over $\beta$ to the vertex of $P$ corresponding to $B.$
\end{proof}

The converse of Lemma \ref{not_ess_implies_null_htpc} is not true. For instance the ideal triangulation of a knot or link in $S^3$ gives rise to a simply connected pseudo-manifold, but there are many edges which are not homotopic into the boundary. For instance, Thurston's ideal triangulation of the figure eight knot complement yields a simply connected pseudo-manifold $P$ with the property that every edge is null-homotopic and essential.


\section{The deformation variety and its friends}
\label{sec:defo}


\subsection{Deformation variety}
\label{sec:Deformation variety}

Let $\Delta^3$ be the standard 3--simplex with a chosen orientation. Suppose the edges from one vertex of $\Delta^3$ are labeled by $z,$ $z'$ and $z''$ so that the opposite edges have the same labeling. Then the cyclic order of $z,$ $z'$ and $z''$ viewed from each vertex depends only on the orientation of the 3--simplex. It follows that, up to orientation preserving symmetries, there are two possible labelings, and we fix one of these labelings. The labels are termed \emph{shape parameters}.

Suppose $P^{(3)} = \{ \sigma_1, \ldots, \sigma_n\}.$ For each $\sigma_i \in P^{(3)},$ fix an orientation preserving simplicial map $f_i\co \Delta^3 \to \sigma_i.$ Let $P^{(1)} = \{ e_1, \ldots, e_m\},$ and let $a^{(k)}_{ij}$ be the number of edges in $f_i^{-1}(e_j),$ which have label $z^{(k)}.$

For each $i \in \{1,\ldots, n\},$ define
\begin{equation}\label{eq:para}
    p_i   = z_i (1 - z''_i) - 1,\quad
    p'_i  = z'_i (1 - z_i) - 1,\quad
    p''_i = z''_i (1 - z'_i) - 1,
\end{equation}
and for each $j \in \{1,\ldots, m\},$ let
\begin{equation}\label{eq:glue}
    g_j = \prod_{i=1}^n z_i^{a_{ij}} {(z'_i)}^{a'_{ij}} {(z''_i)}^{a''_{ij}}
    -1. 
\end{equation}
Setting $p_i=p'_i=p''_i = 0$ gives the \emph{parameter relations}, and setting $g_j=0$  gives the \emph{hyperbolic gluing equations}. For a discussion and geometric interpretation of these equations, see \cite{t, nz}. The parameter relations imply that $z_i \neq 0,1.$
\begin{defn}
The \emph{deformation variety $\D (\tri)$} is the variety in $(\C\setminus \{0\})^{3n}$ defined by the hyperbolic gluing equations together with the
parameter relations.
\end{defn}

Thurston's original parameter space is obtained by choosing a coordinate from each coordinate triple in the above. In calculations, we will often use such a smaller coordinate system. Theorem~\ref{thm:essential edges} is equivalent to the statement that $\D (\tri)\neq \emptyset$ implies that all edges are essential.


\subsection{Yoshida's construction}
\label{sec: developing}

Let $C \subset N \subset P,$ $\tri$ and $\D(\tri)$ be as defined above. Given $Z \in \D (\tri),$ each ideal tetrahedron in $N$ has edge labels which can be lifted equivariantly to $\widetilde{N}.$ Following \cite{y}, we would like to define a continuous map $\Phi_Z\co \widetilde{N} \to \H^3,$ which maps every ideal tetrahedron $\sigma$ in $\widetilde{N}$ to an ideal hyperbolic 3--simplex $\Delta(\sigma),$ such that the labels carried forward to the edges of $\Delta(\sigma)$ correspond to the shape parameters of $\Delta(\sigma)$ determined by its hyperbolic structure; see \cite{t} for the geometry of hyperbolic ideal tetrahedra. Thus, 
\begin{center}
\emph{we need to assume that $\tri$ is virtually almost non-singular}, 
\end{center}
since no ideal hyperbolic simplex has edges identified with each other. It will be necessary to have a consistent choice of parameterisation for the maps from ideal tetrahedra in $\widetilde{N}$ to $\H^3;$ following Thurston \cite{t}, we ensure this by assuming that the map $\sigma \to \H^3$ always is a \emph{straight map}. See \cite{t1, f, LTY} for the details, which play no role in the following.

Each ideal 3--simplex in $\widetilde{N}$ inherits edge labels from $Z.$ Choose a tetrahedron $\sigma$ in $\widetilde{N}$ and an embedding of $\sigma$ into $\H^3$ of the specified shape. For each tetrahedron of $\widetilde{N}$ distinct from $\sigma$ and which has a face in common with $\sigma,$ there is a unique embedding into $\H^3$ which coincides with the embedding of $\sigma$ on the common face and which has the shape determined by $Z.$ If the shape parameters of the new tetrahedron give an ideal hyperbolic tetrahedron of orientation opposite to that of the first one, then this map is not locally injective along the common face. Hence the map is called a \emph{pseudo-developing map} rather than a developing map.

\begin{lem}
Starting with an embedding of $\sigma \subset \widetilde{N},$ there is a unique way to extend this to a well-defined, continuous map $\Phi_Z \co \widetilde{N} \to \H^3,$ such that each ideal tetrahedron in $\widetilde{N}$ is mapped to a hyperbolic ideal tetrahedron of the specified shape. 
\end{lem}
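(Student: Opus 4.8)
The plan is to develop tetrahedron-by-tetrahedron across common faces and to reduce the one nontrivial point, well-definedness, to a computation around each edge, where the gluing equations do exactly what is needed. Uniqueness is immediate from the extension property recalled just before the statement: once the embedding of $\sigma$ is fixed, any tetrahedron sharing a face with an already-placed one admits a \emph{unique} embedding of the prescribed shape agreeing on that face. Let $G$ be the dual graph of the triangulation of $\widetilde N$, with one vertex for each ideal tetrahedron and one edge for each internal face (every face being shared by exactly two tetrahedra). Since $G$ is connected, developing along any path in $G$ from $\sigma$ to a tetrahedron $\tau$ determines $\Phi_Z$ on $\tau$; thus any extension is unique, and continuity of the resulting map follows from the pasting lemma, adjacent developed tetrahedra agreeing on their shared face.

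For existence I would regard this developing procedure as analytic continuation along $G$ and introduce the holonomy homomorphism $\operatorname{hol}\co \pi_1(G,\sigma)\to \PSL$: developing once around a loop $\gamma$ returns a second embedding of $\sigma$ of the same shape, and $\operatorname{hol}(\gamma)$ is the unique isometry taking the original embedding to this one. Concatenation of loops composes holonomies, so $\operatorname{hol}$ is indeed a homomorphism, and the developed image is independent of the chosen path — equivalently, $\Phi_Z$ is well defined — precisely when $\operatorname{hol}$ is trivial.

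The key step is to evaluate $\operatorname{hol}$ on the loops encircling a single edge. Fix an edge $\widetilde e$ of $\widetilde N$ lying over $e_j\in P^{(1)}$. The tetrahedra meeting $\widetilde e$ form a cyclic chain, and developing them in order places $\widetilde e$ along a fixed geodesic axis, each successive tetrahedron being obtained from its predecessor by the elementary isometry fixing that axis and acting on $\partial\H^3$ as multiplication by the relevant shape parameter. Going once around, $\operatorname{hol}$ of the edge-loop is multiplication by the product of all shape parameters incident to $\widetilde e$, i.e. by the quantity appearing in the gluing polynomial $g_j$. As $Z\in\D(\tri)$ forces $g_j=0$, this product equals $1$ and the edge-loop has trivial holonomy. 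Note that only the product, and not the total dihedral angle, is constrained, which is exactly why $\Phi_Z$ need not be locally injective.

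It remains to upgrade this to triviality of $\operatorname{hol}$ on all of $\pi_1(G)$. Equip $\widetilde N$ with the CW structure dual to its ideal triangulation, so that $G$ is the dual $1$--skeleton, the dual $2$--cells correspond to edges, and the attaching loop of the $2$--cell dual to $\widetilde e$ is precisely the edge-loop around $\widetilde e$; since $\widetilde N$ deformation retracts onto this dual $2$--skeleton, the latter is simply connected. Hence $\pi_1(G)$ is normally generated by the edge-loops, on each of which $\operatorname{hol}$ vanishes, so the normal subgroup $\ker(\operatorname{hol})$ is all of $\pi_1(G)$ and $\operatorname{hol}$ is trivial. This yields path-independence and the required map $\Phi_Z$. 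I expect the edge computation identifying the local holonomy with the gluing factor to be the substantive step; the surrounding argument is the standard dual-$2$--skeleton reduction, which uses only that $\widetilde N$ is simply connected (as established for $\widetilde N\subset\widetilde P$) and that $\tri$ is virtually almost non-singular, so that each ideal tetrahedron of $\widetilde N$ — and therefore each of its developed images — is embedded and carries well-defined shape parameters.
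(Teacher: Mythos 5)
Your proof is correct and is essentially the paper's own argument written out in full: the paper's proof is a two-line sketch --- well-definedness follows from simple connectivity of $\widetilde{N},$ embeddedness of abstract edge neighbourhoods, and the hyperbolic gluing equations, with the details left as an exercise and a reference --- and your dual-graph development, edge-holonomy computation, and dual-spine reduction of $\pi_1(G)$ to the edge-loops is exactly that outline made precise. The one point you use silently (this is what the paper's phrase ``each abstract edge neighbourhood is embedded'' is doing) is that the cyclic chain of tetrahedra around $\widetilde{e}$ contributes precisely the product of shape parameters appearing in $g_j$ rather than some power of it; this holds because a neighbourhood of an open edge of $N$ is a ball and hence lifts homeomorphically to $\widetilde{N},$ and in any case a $k$-fold wrap would give holonomy $1^k=1,$ so your conclusion is unaffected.
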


\begin{proof} 
Since $\widetilde{N}$ is simply connected and each abstract edge neighbourhood is embedded, it follows from the hyperbolic gluing equations that the map is well-defined. The reader may find pleasure in doing this exercise or consult \cite{ac} for a full treatment.
\end{proof}

\begin{lem}\label{lem:special case}
If $\D(\tri)\neq \emptyset$ and $\tri$ is virtually almost non-singular, then $\tri$ is virtually non-singular (hence all edges in $P$ are essential).
\end{lem}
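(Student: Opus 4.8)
The plan is to argue by contradiction using the pseudo-developing map $\Phi_Z$ supplied by Yoshida's construction, which is available precisely because $\tri$ is assumed virtually almost non-singular. Suppose $Z\in\D(\tri)$ but $\tri$ is \emph{not} virtually non-singular. Then, as in the proof of Lemma~\ref{lem: virt non sing iff essential}, some tetrahedron $\widetilde{T}$ of $\widetilde{P}$ carries a self-identification. Since virtually almost non-singular means that no tetrahedron has two of its edges identified (and a face-identification would identify edges), this self-identification can only occur at the vertices: two distinct vertices $a,b$ of the model simplex $\Delta^3$ realising $\widetilde{T}$ are glued to one and the same vertex $\widetilde{v}\in\widetilde{P}^{(0)}$. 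I want to show that this is incompatible with the existence of $\Phi_Z$.

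First I would record that, for every ideal tetrahedron $T$ of $\widetilde{N}$ incident to $\widetilde{v}$, the map $\Phi_Z$ sends $T$ to an ideal hyperbolic simplex whose four ideal vertices are \emph{distinct}: the parameter relations force the shape parameter to lie in $\C\setminus\{0,1\}$, so the ideal vertices of the target may be normalised to $0,1,\infty,z$ with $z\notin\{0,1,\infty\}$ (this holds even when $z$ is real and the simplex is flat). Assigning to each such $T$ the ideal point $q_T\in\partial_\infty\H^3$ that is the image under $\Phi_Z$ of the corner of $T$ at $\widetilde{v}$ gives a labelling of the corners in the vertex link $L_{\widetilde{v}}$. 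The key step is to prove that $q_T$ does not depend on $T$. If $T,T'$ are both incident to $\widetilde{v}$ and share a face $F$ meeting $\widetilde{v}$, then $\Phi_Z$ agrees on $F$ by construction, so $\Phi_Z(T)$ and $\Phi_Z(T')$ share the ideal triangle $\Phi_Z(F)$ and in particular send the $\widetilde{v}$-corner of $F$ to the same ideal point; hence $q_T=q_{T'}$. Since $\widetilde{v}$ is a connected component of $\partial\widetilde{C}$, the link $L_{\widetilde{v}}$ is connected, and adjacency of normal triangles in $L_{\widetilde{v}}$ is exactly the relation of sharing a face at $\widetilde{v}$; therefore $q_T$ is constant on $L_{\widetilde{v}}$.

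It then remains to extract the contradiction. Because $\tri$ is virtually almost non-singular, the corners of $\widetilde{T}$ at $a$ and at $b$ are two \emph{distinct} normal triangles, and both belong to $L_{\widetilde{v}}$ as $a$ and $b$ are identified to $\widetilde{v}$. Constancy of $q$ forces the ideal points $\Phi_Z(a)$ and $\Phi_Z(b)$ — the images of these two corners — to coincide. But $\Phi_Z(\widetilde{T})$ is an ideal hyperbolic simplex with four distinct ideal vertices, so $\Phi_Z(a)\neq\Phi_Z(b)$, a contradiction. Hence $\tri$ is virtually non-singular, and all edges of $P$ are essential by Lemma~\ref{lem: virt non sing iff essential}. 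I expect the only genuine obstacle to be the key step above, namely upgrading the local face-matching of $\Phi_Z$ to the global statement that an entire vertex develops to a single ideal point; this is exactly where connectedness of the vertex link enters, and it is the hypothesis of virtual almost non-singularity that guarantees both that $\Phi_Z$ exists and that the two offending corners are genuinely distinct triangles in that link.
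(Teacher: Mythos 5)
Your proof is correct and takes essentially the same route as the paper's own: the paper likewise develops $\Phi_Z$ along a path of normal triangles in the link of the vertex $\widetilde{v}$ (your connectedness of $L_{\widetilde{v}}$) to conclude that the two corners of the offending tetrahedron map to the same ideal point, contradicting that an ideal hyperbolic simplex has four distinct ideal vertices. The only cosmetic differences are that the paper uses an explicit finite sequence of tetrahedra where you invoke connectedness of the vertex link, and that your label $q_T$ should strictly be attached to corners (normal triangles) rather than to tetrahedra, which is how you in fact use it in the final step.
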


\begin{proof}
Let $Z \in \D(\tri).$ The Yoshida map $\Phi_Z$ is well defined since $\tri$ is virtually almost non-singular. Suppose the ideal tetrahedron $\sigma$ in $\widetilde{N}$ has two ideal vertices at the same vertex of $\widetilde{P}.$ Let $t$ and $t'$ denote the normal triangles in $\sigma \cap \bdry \widetilde{C}$ dual to these vertices. Since $t$ and $t'$ are in the link of the same vertex of $\widetilde{P},$ there is a path in $\bdry \widetilde{C}$ from $t$ to $t'$ passing through finitely many normal triangles. This path corresponds to a finite sequence $\sigma_1, \ldots, \sigma_k$ of ideal tetrahedra in $\widetilde{N},$ and a corresponding sequence $v_1, \ldots, v_k$ of ideal vertices of the $\sigma_i$, dual to the sequence of normal triangles. The map $\Phi_Z$ extends to map these ideal vertices into $\overline{\H}^3$, and as we develop along the sequence of tetrahedra, we see that $\Phi_Z(v_1) = \Phi_Z(v_2) = \cdots = \Phi_Z(v_k)$.
But $v_1$ and $v_k$ are two distinct vertices of $\sigma$, so the image of $\sigma$ cannot be an ideal hyperbolic 3--simplex, and thus the Yoshida map is not well-defined.
\end{proof}

\begin{cor}\label{cor:yoshida depends on ideal points}
If the Yoshida map $\Phi_Z\co \widetilde{N} \to \H^3$ is well-defined, then it extends to a continuous map $\overline{\Phi}_Z \co \widetilde{P} \to \overline{\H}^3.$
\end{cor}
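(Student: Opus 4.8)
The plan is to extend $\Phi_Z$ over the discrete set $\widetilde{P}^{(0)}=\widetilde{P}\setminus\widetilde{N}$ of added vertices, sending each to the matching ideal vertex of an adjacent image tetrahedron, and then to verify continuity. Since $\widetilde{N}$ is open in $\widetilde{P}$ and $\overline{\Phi}_Z$ agrees there with the continuous map $\Phi_Z,$ only the points of $\widetilde{P}^{(0)}$ need attention. To define the extension, fix $v\in\widetilde{P}^{(0)}$ and an ideal tetrahedron $\sigma$ of $\widetilde{N}$ incident to $v;$ then $\Phi_Z(\sigma)$ is a nondegenerate hyperbolic ideal tetrahedron, and the corner of $\sigma$ at $v$ singles out one of its ideal vertices $\widehat{v}_\sigma\in\bdry\H^3,$ which I take as the value $\overline{\Phi}_Z(v).$ If $\sigma,\sigma'$ meet in a $2$--face incident to $v,$ then $\Phi_Z$ sends this face to a common face of the two image tetrahedra, forcing $\widehat{v}_\sigma=\widehat{v}_{\sigma'}.$ The tetrahedra and faces incident to $v$ are dual to the triangles and edges of the link of $v,$ namely the connected component $B$ of $\bdry\widetilde{C}$ that is coned to $v;$ connectedness of $B$ then lets me pass between any two incident tetrahedra along such faces, so all the $\widehat{v}_\sigma$ agree and the value $w:=\overline{\Phi}_Z(v)$ is well defined. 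This is exactly the developing argument already used in the proof of Lemma~\ref{lem:special case}.

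Next I would prove continuity at $v.$ Normalise the upper half--space model so that $w=\infty,$ and let $h$ denote Euclidean height. Every neighbourhood of $w$ in $\overline{\H}^3$ contains a horoball set $\{h>r\}\cup\{w\},$ so it is enough to produce, for every $r,$ an open set $V_r\ni v$ with $\overline{\Phi}_Z(V_r)\subseteq\{h>r\}\cup\{w\}.$ Set $G=h\circ\Phi_Z$ on the punctured open star of $v;$ this is continuous, since every tetrahedron incident to $v$ sends its $v$--corner towards $\infty.$ On a \emph{closed} incident tetrahedron $\sigma$ the extended map is continuous and sends $v$ to $\infty$ while sending the remaining three ideal vertices to finite boundary points; as $\sigma$ is compact, $\{G\le r\}\cap\sigma$ is compact and avoids $v.$ Writing the corner of $\sigma$ at $v$ in cone coordinates $\tau_\sigma\times[0,1]$ with $v$ at $t=1,$ a compact set avoiding $\tau_\sigma\times\{1\}$ sits inside $\tau_\sigma\times[0,1-\eta_\sigma]$ for some $\eta_\sigma>0,$ so the corner $\tau_\sigma\times(1-\eta_\sigma,1]$ maps into $\{h>r\}\cup\{w\}.$

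It remains to glue these corners into one open neighbourhood. A neighbourhood of $v$ in $\widetilde{P}$ is a cone on $B,$ and a set of the form $\{(b,t):t>f(b)\}$ is open precisely when $f\co B\to[0,1)$ is upper semi--continuous. Since $C$ is compact, the triangulation induced on $B$ is locally finite, so each $b$ lies in only finitely many link triangles; putting $f(b)=\max\{\,1-\eta_\sigma:\tau_\sigma\ni b\,\}$ gives a function valued in $[0,1),$ and it is upper semi--continuous because for $b'$ near $b$ every link triangle containing $b'$ already contains $b.$ Taking $V_r=\{(b,t):t>f(b)\}\cup\{v\}$ then works, completing the proof.

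The one genuine difficulty is this final gluing. The link $B$ covers the compact vertex link in $P$ and is typically non-compact, so $\widetilde{P}$ fails to carry the weak topology at its vertices and continuity cannot be checked simplex by simplex; one must instead use the cone topology and allow the neighbourhood of $v$ to pinch in at rates that vary with the tetrahedron, which is exactly what the upper semi--continuous function $f$ records. Local finiteness of the triangulation of $B$ is what makes such an $f$ available; everything else is either immediate or a restatement of the developing argument of Lemma~\ref{lem:special case}.
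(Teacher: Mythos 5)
Your proof is correct, and its first half is exactly the paper's argument: the paper gives this corollary no standalone proof, because the developing argument in the proof of Lemma~\ref{lem:special case} (passing along a path of normal triangles in the vertex link, so that tetrahedra sharing a face incident to $v$ are forced to agree on the image of the corner at $v$) already shows the extension is well defined on $\widetilde{P}^{(0)}$, and continuity is treated as immediate. Your continuity verification is where you go beyond the paper, and it is valid, including the reduction to the sets $\{h>r\}\cup\{w\}$: these are not open in $\overline{\H}^3$ (every genuine neighbourhood of $w$ contains finite boundary points), but they are cofinal in the neighbourhood filter at $w$, which is all your reduction needs and is what you assert. One step deserves more justification than ``as $\sigma$ is compact'': closedness of $\{G\le r\}\cap\sigma$ in $\sigma$ requires that points of $\sigma$ approaching $v$ have $G\to\infty$, which holds because the image tetrahedron, having one ideal vertex at $\infty$, lies in a vertical prism of bounded horizontal extent, so within it large Euclidean norm forces large height; this is what your phrase ``sends its $v$--corner towards $\infty$'' is implicitly using.

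However, the ``one genuine difficulty'' you identify at the end is not a difficulty, and the claim you make there is false: $\widetilde{P}$ \emph{does} carry the weak topology determined by its closed $3$--simplices, precisely because of the local finiteness you yourself establish. Indeed, suppose $U\cap\sigma$ is open in $\sigma$ for every closed simplex $\sigma$. The truncated simplices form a locally finite closed cover of $\widetilde{C}$ (the triangulation of $C$ is finite and lifts along the covering), so $U\cap\widetilde{C}$ is open in $\widetilde{C}$; and for each boundary component $B$, the sets $\tau\times[0,1]$ ($\tau$ a link triangle) form a locally finite closed cover of $B\times[0,1]$ (local finiteness of the triangulation of $B$, i.e.\ finiteness of edge degrees in $\widetilde{P}$), and openness of $U$ in each corner cone $C\tau\subset\sigma$ pulls back to openness in $\tau\times[0,1]$; hence the preimage of $U$ in $B\times[0,1]$ is open. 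By the definition of the quotient topology on $\widetilde{P}$, the set $U$ is open. The failure of local finiteness \emph{at} the cone point is harmless, because the quotient map from $B\times[0,1]$ resolves it. Consequently the short proof was available: $\overline{\Phi}_Z$ restricted to each closed simplex is the continuous extension of a straight map to a closed ideal hyperbolic simplex, hence continuous, and continuity of $\overline{\Phi}_Z$ follows simplex by simplex. Your upper semi-continuous gluing function is, in effect, a hand-made proof of this coincidence of topologies in the case at hand; it is correct, but it is machinery deployed against an obstacle that is not there.
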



\subsection{Representations}
\label{sec:reps}

Suppose that the Yoshida map is well-defined and, in particular, that $\tri$ is virtually almost non-singular. We will see (in Corollary \ref{cor:essential edges}) that these hypotheses hold if $\D (\tri)\neq \emptyset$.

For each $Z \in \D (\tri),$ the Yoshida map $\Phi_Z$ can be used to define a representation $\rho_Z \co \pi_1(N) \to \PSL$ as follows (see \cite{y}). A representation into $\PSL$ is an action on $\H^3,$ and this is the unique representation which makes $\Phi_Z$ $\pi_1(N)$--equivariant: $\Phi_Z(\gamma \cdot x) = \rho_Z(\gamma) \Phi_Z(x)$ for all $x \in \widetilde{N},$ $\gamma \in \pi_1(N).$  Thus, $\rho_Z$ is well--defined up to conjugation, since it only depends upon the choice of the embedding of the initial tetrahedron $\sigma.$ This yields a well--defined map $\chi_{\tri}\co \D (\tri) \to \PX (N)$ from the deformation variety to the $\PSL$--character variety. It is implicit in \cite{nr} that $\chi_{\tri}$ is algebraic; see \cite{ac} for details using a faithful representation of $\PSL \to SL(3, \C).$ Note that the image of each peripheral subgroup under $\rho_Z$ has at least one fixed point on the sphere at infinity.

\begin{rmk}
The representation associated to a solution of the hyperbolic gluing equations may be reducible, or even trivial. For instance, the triangulation of $S^3$ with two tetrahedra obtained by identifying the boundary spheres of the tetrahedra in the natural way has a curve of solutions to the hyperbolic gluing equations, and the associated representations of the fundamental group of $S^3$ minus four points are all trivial.
\end{rmk}

The representation arising from Yoshida's construction can be understood using \emph{elementary face pairings} as follows. An elementary face pairing of the hyperbolic ideal tetrahedron $\Delta$ is an element of $\PSL$ taking one face of $\Delta$ to another. If the deck transformation $\gamma \in \pi_1(N)$ takes the ideal triangle $\tau$ in $\til{N}$ to the ideal triangle $\gamma \cdot \tau,$ and $\sigma^3_0, \ldots, \sigma^3_k$ is a sequence of tetrahedra with the property that $\tau \subset \sigma^3_0,$ $\gamma\cdot\tau \subset \sigma^3_k,$ and consecutive tetrahedra share a face distinct from $\tau$ and $\gamma\cdot\tau,$ then there is an associated product of elementary face pairings, one for each $\Phi_Z(\sigma^3_j),$ such that $\rho_Z(\gamma)$ is their product. This fact is used in \cite{ac} to show that $\chi_{\tri}$ is algebraic; it will be used below for a different purpose.


\subsection{Generalisations}
\label{sec:generalisations}

For each $e_j \in P^{(1)},$ we introduce a variable $\xi_{e_j} \in S^1,$ and define
\begin{equation}\label{eq:glue}
    g_{e_j}(\xi_{e_j}) = \prod_{i=1}^n z_i^{a_{ij}} {(z'_i)}^{a'_{ij}} {(z''_i)}^{a''_{ij}}
    -\xi_{e_j}. 
\end{equation}
Setting $g_{e_j}(\xi_{e_j})=0$ gives the \emph{cone-hyperbolic gluing equation} for $e_j.$ Since the product of all shape parameters associated to all edges equals $1,$ we have the following consequence:
$$\prod_{e \in P^{(1)}}\xi_e =1.$$ 
We also define the \emph{holonomy around $e_j$} to be
$$h(e_j) = \prod_{i=1}^n z_i^{a_{ij}} {(z'_i)}^{a'_{ij}} {(z''_i)}^{a''_{ij}}.$$

\begin{defn}
For a triangulation with $n$ tetrahedra and $m$ edges, the \emph{cone-deformation variety $\D (\tri; \star)$} is the variety in $(\C-\{0 \})^{3n}\times (S^1)^m$ defined by the cone-hyperbolic gluing equations together with the parameter relations. 
\end{defn}

The cone-deformation variety is non-empty for \emph{any} triangulation. Indeed, if $z=\frac{1}{2}(1 + \sqrt{-3}),$ then all of $z,$ $z'$ and $z''$ are roots of unity and specify the regular hyperbolic ideal 3--simplex. This is the unique hyperbolic ideal 3--simplex with the property that all shape parameters are roots of unity. Assigning values to all shape parameters in this way, one can solve $g_e(\xi_e)=0$ for $\xi_e \in S^1$ uniquely for each $e.$ It turns out that this solution has particularly nice properties, giving us Observation~\ref{cor:existence}.


\begin{proof}[Proof of Observation~\ref{cor:existence}]
Identify each ideal 3--simplex in $\widetilde{\Delta} \setminus \widetilde{\Delta}^{(0)}$ with the regular ideal 3--simplex. Gluing faces in pairs by hyperbolic isometries according to the face pairings gives a hyperbolic structure on $N$ minus edges. Around the edges there is no shearing, and the total angle around edge $e$ is its degree times $\frac{\pi}{3}.$ Hence there is a non-compact hyperbolic cone-manifold structure on $N$ with singular locus contained in $N^{(1)}.$ Near the ideal vertices of the tetrahedra, one can consistently choose horospherical triangles that match up and the structure is therefore complete.
\end{proof}

Denote the projections to the factors $F_1 \co \D (\tri; \star) \to (\C-\{0 \})^{3n}$ and $F_2\co \D (\tri; \star) \to (S^1)^m.$ Given $\xi \in (S^1)^m,$ we let $\D (\tri; \xi)= F_2^{-1}(\xi).$ Then $F_2^{-1}((\xi_1, \ldots, \xi_m))$ may be empty, and $\D(\tri) = \D(\tri, (1, \ldots, 1)).$


\section{The handlebody construction}
\label{sec:handlebody}


\subsection{The handlebody cover}

Consider the compact manifold $H := C \setminus \nu(P^{(1)})$, where $\{\nu(e)\mid e\in P^{(1)}\}$ is a set of pairwise disjoint open tubular neighbourhoods of the edges $P^{(1)}$. The manifold $H$ is a handlebody, and inherits a cell decomposition from $\tri$ into \emph{doubly truncated tetrahedra}: truncated at the vertices and at the edges. See Figure~\ref{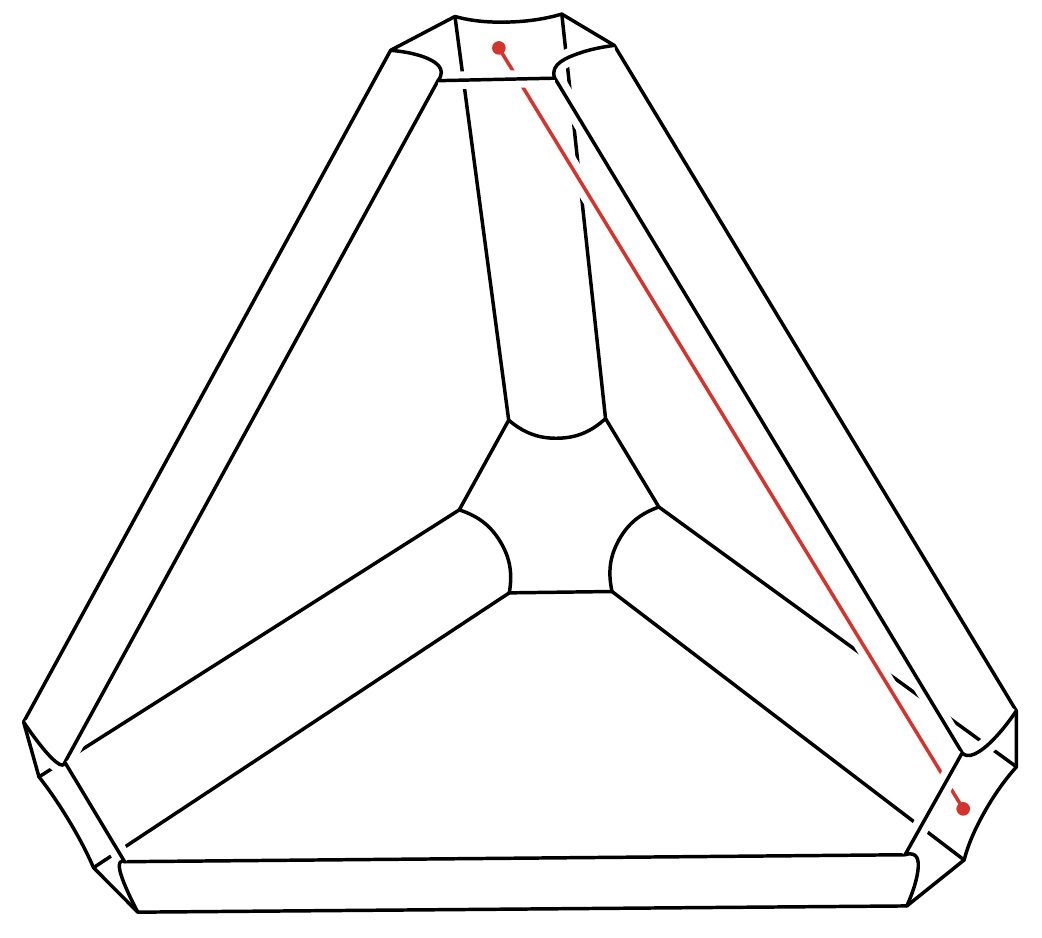} for a picture of a doubly truncated tetrahedron. A doubly truncated tetrahedron has four \emph{boundary hexagonal faces} on $\bdry C$, six \emph{rectangular faces} on $\bdry H \setminus \bdry C,$ and four \emph{interior hexagonal faces}.

Consider the decomposition of $\bdry H$ into the union of the \emph{vertex boundary} $\bdry H \cap \bdry C$ (composed of boundary hexagonal faces) and the \emph{edge boundary} $\bdry H \setminus (\bdry H \cap \bdry C$) (composed of rectangular faces). The edge boundary consists of a pairwise disjoint union of annulus components, one for each edge of $C$.

For any cover $K$ of $H$, define a topological space $K^*$ as follows. Lift the decomposition of $H$ to $K$. The boundary of $K$ decomposes into \emph{vertex boundary} components made up out of boundary hexagonal pieces, and \emph{edge boundary} components made up out of rectangular faces. For each edge boundary component of $H,$ we fix a product structure which identifies it with $S^1 \times [0,1].$ Then the edge boundary components of $K$ are either of the form $S^1 \times [0,1]$ or $\mathbb{R} \times [0,1],$ and the product structure is preserved by the deck transformations. We form $K^*$ by first collapsing each edge boundary component of $\bdry K$ by projection to the $[0,1]$ factor, to form a topological space $K'$. Next we collapse each component of the boundary of $K'$ to form $K^*$. 

As in the construction of $P$ from $C$, we can equivalently construct $K^*$ from $K$ by coning rather than collapsing. Note that $H^* = P$. Also note that $K^*$ is a union of tetrahedra identified along faces. Unlike $P$ or $\til{P}$, $K^*$ can have edges incident with infinitely many tetrahedra. We will be interested in the universal cover $\til{H}$ of $H,$ and in $\til{H}^*$. Since $H$ is a handlebody, $\til{H}$ is homotopy equivalent to a tree. Notice that the construction gives a natural quotient map $\til{H}^* \to P.$

\begin{lem}\label{lem: non-singular in handlebody cover}
Every tetrahedron in $\til{H}^*$ is non-singular.
\end{lem}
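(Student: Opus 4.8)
The plan is to exploit the fact, recorded just above the statement, that the handlebody $H$ deformation retracts onto the dual $1$--complex $\graph$ whose vertices are the doubly truncated tetrahedra and whose edges record the gluings along \emph{interior} hexagonal faces (the rectangular and boundary hexagonal faces lie in $\bdry H$ and are not glued). Hence $\pi_1(H)=\pi_1(\graph)$ is free, and the universal cover $\til{H}$ deformation retracts onto the universal cover of $\graph$, which is a tree $\tree$; this $\tree$ is precisely the dual $1$--complex of the tetrahedra-and-interior-face-gluings of $\til{H}$. Since $\til{H}\to H$ is a covering and each doubly truncated tetrahedron is a simply connected cell, every lift $\til{\Delta}\subset\til{H}$ is embedded, so its four interior hexagonal faces, six rectangular faces and four boundary hexagonal faces are pairwise distinct. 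A tetrahedron of $\til{H}^*$ is the image of such a $\til{\Delta}$ under the collapse, and it is singular precisely when two of these faces become identified: two interior hexagonal faces (a \emph{face} identification), two rectangular faces (an \emph{edge} identification), or two boundary hexagonal faces (a \emph{vertex} identification). Identifications across different dimensions are impossible, so these three cases are exhaustive.

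Face identifications are immediate to rule out: two interior hexagonal faces of $\til{\Delta}$ are identified only if they are glued to one another, which is an edge of $\tree$ from $\til{\Delta}$ to itself. As $\tree$ is a tree it has neither loops nor multiple edges, so this cannot occur. The remaining two cases I would treat uniformly. Two rectangular (resp. boundary hexagonal) faces of $\til{\Delta}$ are identified only if they lie in a common edge boundary component $A$ (resp. vertex boundary component $S$) of $\til{H}$. Such a component is a connected surface tiled by rectangular (resp. boundary hexagonal) faces, in which adjacent faces meet along an edge dual to a shared interior hexagonal face; a gallery in the component therefore projects to a walk in $\tree$, and a gallery joining the two faces of $\til{\Delta}$ projects to a \emph{closed} walk based at the vertex $\til{\Delta}\in\tree$.

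The key observation is that a backtrack in this walk---crossing a shared interior face and immediately recrossing it---returns to the same rectangular face (resp. the same corner) of the intermediate tetrahedron, since matching faces, resp. corners, across an interior hexagonal face is an involution; hence when such a backtrack occurs it also returns to the same face of $\til{\Delta}$. Removing backtracks from a closed walk in a tree reduces it to the constant walk, and under this reduction the gallery reduces correspondingly. Because no two faces of the single tetrahedron $\til{\Delta}$ are adjacent in the gallery graph (crossing an interior face always leaves $\til{\Delta}$), a gallery projecting to the constant walk must be a single face. Since the reduction preserves endpoints, the two faces of $\til{\Delta}$ must coincide, contradicting embeddedness of $\til{\Delta}$. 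Thus no edge or vertex identification occurs, and every tetrahedron of $\til{H}^*$ is non-singular.

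I expect the vertex case to be the main obstacle, and specifically the passage from ``two corners in the same vertex boundary component'' to a contradiction. For an edge boundary component the gallery runs monotonically around the edge and is automatically non-backtracking, so a closed walk based at $\til{\Delta}$ is forced to be trivial at once. The vertex link, however, is genuinely two--dimensional and admits galleries that wander, so one cannot avoid the backtrack-reduction argument; it is here that one must use that $\bdry\til{H}$ lifts all of $\pi_1(H)$, so that the gallery walk lives in the tree $\tree$ and not merely in $\graph$. The crux is therefore the matching-is-an-involution observation, which identifies backtracks in $\tree$ with genuine backtracks of the gallery and lets the tree structure collapse any closed gallery based at a single tetrahedron.
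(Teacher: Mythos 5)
Your proof is correct and takes essentially the same route as the paper: both arguments rest on the tetrahedra of $\til{H}$ forming a tree, combined with the observation that within a single doubly truncated tetrahedron the corner hexagons (resp.\ rectangles) are pairwise disjoint and each meets a given interior hexagonal face in at most one side, so that a backtrack in the tree forces a backtrack of the boundary gallery. The paper first reduces every self-identification to a pair of identified vertices and then runs a minimal-length path with a ``leaf tetrahedron'' excursion homotoped away, which is the continuous version of your combinatorial backtrack-reduction split into face/edge/vertex cases.
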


\begin{proof}
If there is a tetrahedron with self-identifications of any kind then there will be some pair of its vertices that are identified, and thus an edge with its endpoints being the same point.  


Suppose that we have such an edge $e$. Truncate all of the tetrahedra of $\til{H}^*$ at the vertices to form a topological space $\til{H}^*_C$ (a ``core'' of $\til{H}^*$, analogous to the compact core $C$ of $P$). Both points $p_1,p_2 \in e \cap \bdry \til{H}^*_C$ are on the same component of $\bdry \til{H}^*_C$, so we can choose a path $\beta$ contained in a single component of $\bdry \til{H}^*_C$ whose endpoints are $p_1$ and $p_2$. We may deform $\beta$ slightly so that it does not pass through any edges other than $e$. We can now drill out the neighbourhoods $\nu(e_i)$ of the edges of $\til{H}^*_C$ to get back $\til{H}$, with the curve $\beta$ contained in a component of $\bdry \til{H} \cap \bdry \til{C}$, and meeting $\bdry \til{H} \setminus \bdry \til{C}$ only at its endpoints, which are on the edge boundary near the two ends of $e$.

We may assume that the path $\beta$ starts in some (doubly truncated) tetrahedron $\sigma_0$ at an intersection between a boundary hexagonal face and a rectangular face, traverses through the tetrahedra of $\til{H}$ along $\bdry \til{H} \cap \bdry \til{C}$, and then returns back to $\sigma_0$, ending at the other end of the rectangular face. 

Consider such a path $\beta$ that visits a minimal number of tetrahedra. Since the tetrahedra form a tree, there must be at least one ``leaf'' tetrahedron $\sigma$ in the path. That is, the path enters $\sigma$ from one face gluing (at the interior hexagonal face $f$) and exits at the same face gluing. The path $\beta$ is restricted to lie in  $\bdry \til{H} \cap \bdry \til{C}$, and so it enters $\sigma$ at the edge of one of the three boundary hexagonal faces of $\sigma$ adjacent to $f$. There is no path within $\sigma$ along  $\bdry \til{H} \cap \bdry \til{C}$ from one of the boundary hexagonal faces to any of the others, so it must exit $\sigma$ at the same edge. But then the part of $\beta$ in $\sigma$ could be homotoped away, and the path was not minimal. This gives a contradiction.\end{proof}


\subsection{Representations of the handlebody group}

Since there are no gluing consistency conditions to satisfy for the tetrahedra in $\til{H}^*$ (because every edge of $\til{H}^*$ is of infinite degree), we can assign \emph{arbitrary} shape parameters $z\in\C\setminus\{0,1\}$ to the tetrahedra of $\til{H}^*$ and define a pseudo-developing map $D\co\til{H}^*\rightarrow\overline{\H}^3.$ This follows as in the proof of Lemma~\ref{lem: non-singular in handlebody cover} from the fact that $\til{H}^*$ is homotopy equivalent to a tree. In particular, we can build the developing map by starting with a tetrahedron $\sigma_0$ of $\til{H}^*$ and any ideal hyperbolic tetrahedron in $\H^3$, and then developing along any non-backtracking path of tetrahedra in $\til{H}^*$ starting with $\sigma_0$, using any non-degenerate shapes of ideal hyperbolic tetrahedra. As we develop, we never have any consistency conditions to satisfy, because the tetrahedra of $\til{H}^*$ form a tree.

\begin{lem}
Suppose that the parameters for the tetrahedra in $\til{H}^*$ are lifts of the parameters of the tetrahedra of $H.$ Then there exists a well-defined representation $\rho\co\pi_1H\to \PSL.$ Moreover, this representation makes $D\co\til{H}^*\rightarrow\overline{\H}^3$ equivariant if and only if for each edge $e$ in $P,$ the holonomy around $e$, $h(e)$ is an element of $S^1.$ 
\end{lem}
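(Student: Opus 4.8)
The plan is to build $\rho$ from the tree structure together with the $\pi_1 H$--invariance of the shapes, and then to test equivariance stratum by stratum, with the only obstruction appearing along the edges of $\til H^*$.

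First I would construct $\rho$. By hypothesis the shape parameters on the tetrahedra of $\til H^*$ are pulled back from $H$, so the deck action of $\pi_1 H$ on $\til H^*$ preserves the labelled shapes. Fix $\gamma\in\pi_1 H$; then $D$ and $D\circ\gamma$ assign the same shape to every tetrahedron. Since the dual graph of $\til H^*$ is a tree, a pseudo-developing map is rigidly determined on the union of its closed tetrahedra by the shapes together with the image of a single tetrahedron: adjacent tetrahedra share a face, the shape fixes the next image once that face is known, and connectedness of the tree propagates this globally with no consistency condition to check. Hence there is a unique $\rho(\gamma)\in\PSL$ with $D\circ\gamma=\rho(\gamma)\circ D$ on every tetrahedron, and uniqueness applied to $D\circ(\gamma_1\gamma_2)$ shows that $\rho$ is a homomorphism. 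This supplies the asserted representation, and by construction $D$ is already $\rho$--equivariant on the interiors and faces of all tetrahedra; only the behaviour on the edges and ideal vertices of $\til H^*$ remains in question.

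Next I would dispose of the vertices and reduce the problem to the edges. An ideal vertex $v$ of $\til H^*$ is the image of a collapsed vertex--boundary component and maps under $D$ to a single point $D(v)\in\bdry\overline{\H}^3$; every tetrahedron in its link develops to an ideal simplex with ideal vertex $D(v)$, so any $\gamma$ fixing $v$ has $\rho(\gamma)$ fixing the single point $D(v)$, and equivariance at vertices holds automatically. For an edge, let $\tilde e$ be a lift in $\til H^*$ of $e\in P^{(1)}$ and let $\mu_e\in\pi_1 H$ be the corresponding meridian. The key observation is that $\mu_e$ fixes $\tilde e$ pointwise: in $\til H$ the edge boundary annulus over $e$ lifts to a strip on which $\mu_e$ acts by translation in the meridian direction, and forming $\til H^*$ collapses exactly this direction, so the induced action on the restored edge is trivial. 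Thus equivariance at the points of $\tilde e$ is equivalent to $\rho(\mu_e)$ fixing the developed geodesic $D(\tilde e)$ pointwise.

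Finally I would compute $\rho(\mu_e)$. Developing the fan of tetrahedra around $\tilde e$ sends $\tilde e$ to a geodesic with ideal endpoints, say $0$ and $\infty$, and these are fixed by $\rho(\mu_e)$; the standard description of the holonomy around an edge as a product of elementary face pairings identifies $\rho(\mu_e)$ with the map $z\mapsto h(e)\,z$, whose multiplier is exactly the holonomy $h(e)$ around $e$. In the upper half space model this acts on the axis $\{(0,0,r)\}$ by $r\mapsto |h(e)|\,r$, so it fixes $D(\tilde e)$ pointwise if and only if $|h(e)|=1$, i.e.\ $h(e)\in S^1$. Since the various lifts of $e$ carry conjugate meridians with the same multiplier, and since the obstruction is independent at distinct edges, $D$ is equivariant on all of $\til H^*$ if and only if $h(e)\in S^1$ for every edge $e$ of $P$. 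The main obstacle I anticipate is precisely the edge analysis of the last two paragraphs: pinning down that $\mu_e$ acts as the identity on $\tilde e$ (which requires tracking the collapse of the meridian direction through the construction of $\til H^*$) and correctly identifying $\rho(\mu_e)$ as multiplication by $h(e)$ on the developed axis, including its direction. By contrast, the existence of $\rho$ and equivariance on the $3$--cells and at the vertices are comparatively routine consequences of the tree structure.
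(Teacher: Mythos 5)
Your proposal is correct and takes essentially the same route as the paper: the same construction of $\rho$ from the lifted shapes via rigidity of the development along the tree, the same reduction of equivariance to the 1--skeleton, the same key observation that a meridian acts trivially on a collapsed edge of $\til{H}^*$ while its image under $\rho$ acts on the developed axis with multiplier $h(e)$, hence fixes that geodesic pointwise if and only if $h(e)\in S^1$. Your explicit handling of the ideal vertices and of the homomorphism property are minor additions that the paper leaves implicit.
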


\begin{proof}
The map $D$ has been defined as in the Yoshida construction, and we would like to define $\rho$ in a similar fashion. Consider a triangle $\tau_0$ of $\til{H}^*$, and its image $\gamma \cdot \tau_0$ under the deck transformation $\gamma \in \pi_1H$. The image of the three vertices of $\tau_0$ under $D$ give a triplet of distinct points on $\bdry \H^3$, and the three vertices of $\gamma \cdot \tau_0$ give another triplet. We define $\rho(\gamma)$ to be the unique element of $\PSL$ which maps the first triplet to the second. Since the parameters for the tetrahedra in $\til{H}^*$ are lifts of the parameters of the tetrahedra of $H$, this definition is independent of the choice of $\tau_0.$ This proves the existence of $\rho.$

It follows from the construction that $D(\gamma \cdot x) = \rho(\gamma) D(x)$ for all $x \in \til{H}^*$ and $\gamma \in \pi_1(H)$ \emph{except possibly} for those $x$ that are contained in the 1--skeleton. In $\til{H},$ we have a product structure on the lifts of the edge boundary components of $H,$ which is preserved by the deck transformations. Consider a deck transformation $\gamma\in \pi_1 H,$ which preserves the edge boundary component $B$ of $\til{H}.$ Now in $\til{H}^*, B$ is mapped to a 1--simplex, $e',$ and $l=D(e')$ is a geodesic in $\H^3.$ Composing $D$ with an isometry of $\H^3,$ we may assume that $l=[0,\infty].$ Since $\gamma$ preserves the product structure, $\gamma \cdot x = x$ for each $x \in e'$. Thus $D$ is equivariant with respect to $\rho$ if and only if $\rho(\gamma)$ acts on $\H^3$ by fixing $l$ pointwise, i.e.\thinspace acts as a (possibly trivial) rotation about $l.$ 

In $\til{H}^*,$ $\gamma$ acts as a translation on the set of all 3--simplices incident with $e'.$ A connected fundamental domain for this action consists of a finite number of 3--simplices meeting in $e',$ and their number equals the degree of the corresponding edge $e$ in $P.$ Let $\sigma$ be a 3-simplex in this fundamental domain. There is a unique isometry taking $D(\sigma)$ to $D(\gamma\cdot\sigma),$ and this is the rotation with eigenvalue precisely the product $h(e)$ of all shape parameters at $e$ in $P.$ Hence $\rho(\gamma)$ is a rotation if and only if $h(e)$ is an element in $S^1.$
\end{proof}

\begin{defn}\label{defn:cone-manifold cover}
Given $\til{H}^* \to P,$ $D\co\til{H}^*\rightarrow\overline{\H}^3$ and $\rho\co\pi_1H\to \PSL$ as above, let $P_o=\til{H}^*/\ker(\rho)$ and $N_o = P_o\setminus P_o^{(0)}.$ Since $\pi_1(H)$ acts simplicially, $P_o$ and $N_o$ have natural decompositions into simplices.
\end{defn}

If $o(\xi_e)$ is finite for each $e \in N^{(1)},$ then each edge in $N_o$ has finitely many 3--simplices incident with it; namely if $N_o^{(1)}\ni \tilde{e}\to e \in N^{(1)},$ then $\deg(\tilde{e})= o(\xi_e)\deg(e).$ In this case, the natural map $N_o \to N$ is a (not necessarily finite) branched cover, with branch locus contained in the 1--skeleton and group of deck transformations isomorphic to $\pi_1H/\ker(\rho).$ If $o(\xi_e)$ is infinite for some $e,$ then the points of $N_o$ mapping to $e$ are not manifold points.


\begin{proof}[Proofs of Theorems~\ref{thm:essential edges} and \ref{thm:branched cover}]

We first give a proof of Theorem~\ref{thm:essential edges}, and then modify it for the general case. Note that the inclusion map $H \hookrightarrow C$ induces an epimorphism $\pi_1H\twoheadrightarrow\pi_1C$. The kernel of this map is generated by certain loops around the annuli in  $\bdry H \setminus (\bdry H \cap \bdry C)$. For each edge $e$ we denote by $\gamma_e$ a loop around the annulus corresponding to $e$, where $\gamma_e$ is a level set in the product structure, with an arbitrary orientation.

Given Lemma~\ref{lem:special case}, it suffices to assume for contradiction that there is a solution $Z\in \D(\tri)$, but that $\tri$ is not virtually almost non-singular. In particular, by Lemma \ref{lem: virt non sing iff essential}, some edge is inessential. We work with the compact core $C$ of $N$.

Give the tetrahedra in $\til{H}$ the shape parameters inherited from $Z\in \mathfrak{D}(\tri)$ (which are first inherited by $H,$ then lifted to $\til{H}$ and finally inherited by $\til{H}^*$). Associated are a developing map $D:\til{H}^*\rightarrow \H^3$ and a representation $\rho\co \pi_1(H) \to \PSL.$ Recall that we specify a quotient map $\til{H} \rightarrow \til{H}^*$ by collapsing edge boundary components $\mathbb{R} \times [0,1]$ to the second factor, then collapsing each vertex boundary component.

Now let $e$ be an inessential edge in $N$. Consider a path $\alpha':[0,1]\rightarrow H$, which is the core curve of a rectangular face of a doubly truncated tetrahedron incident to the edge boundary annulus corresponding to $e$. Let $\alpha:[0,1]\rightarrow H$ be the result of pushing $\alpha'$ slightly off the rectangle into the tetrahedron, keeping the endpoints on the boundary hexagonal faces. Then $\alpha$ is parallel to $e$, as in Figure~\ref{trunc_tetra.pdf}. 

 \begin{figure}[ht]
\centering
\includegraphics[width=0.3\textwidth]{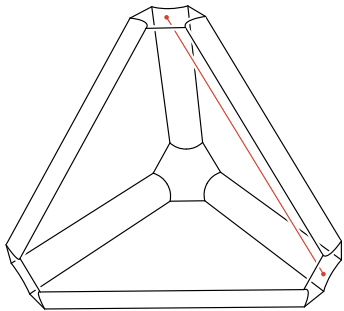}
\caption{A doubly truncated tetrahedron of $H,$ with the path $\alpha$ parallel to the boundary rectangle corresponding to an edge of $P$.}
\label{trunc_tetra.pdf}
\end{figure}

Since $e$ is inessential in $C$, there is a homotopy of it into $\bdry C$, fixing its endpoints. As $\alpha$ (viewed as a path in $C$ since $H\subset C$) is parallel to $e$, we can use the same homotopy to homotope $\alpha$ into $\bdry C$, fixing its endpoints, by first homotoping across the rectangle with $\alpha$ and $e$ as one pair of opposite sides, and the other sides on $\bdry C$ in the obvious way. Viewing the homotopy as a map $D^2 = \bigcirc\rightarrow C$, we may deform it by a small amount to produce a homotopy $h$ transverse to $P^{(1)}$, and moreover so that the intersection of $h(\bigcirc)$ with $\bdry (\nu(P^{(1)})$ consists of a finite number of circles, each of which (pulled back through $h$) bounds a disk in $\bigcirc$ which is contained in a neighbourhood $\nu(e_i)$ and intersects $e_i$ once, transversely. Thus each circle goes around the cylindrical part of $\bdry C$ corresponding to an edge of $N$, and is homotopic to a $\gamma_e$. In particular let $\beta:[0,1]\rightarrow \bdry C$ be the path after the homotopy, and by deforming if necessary we may assume that $\beta$ in fact maps into $\bdry H \cap \bdry C$. See Figure \ref{homotopy_disk.pdf}.

\begin{figure}[ht]
\centering
\includegraphics[width=0.6\textwidth]{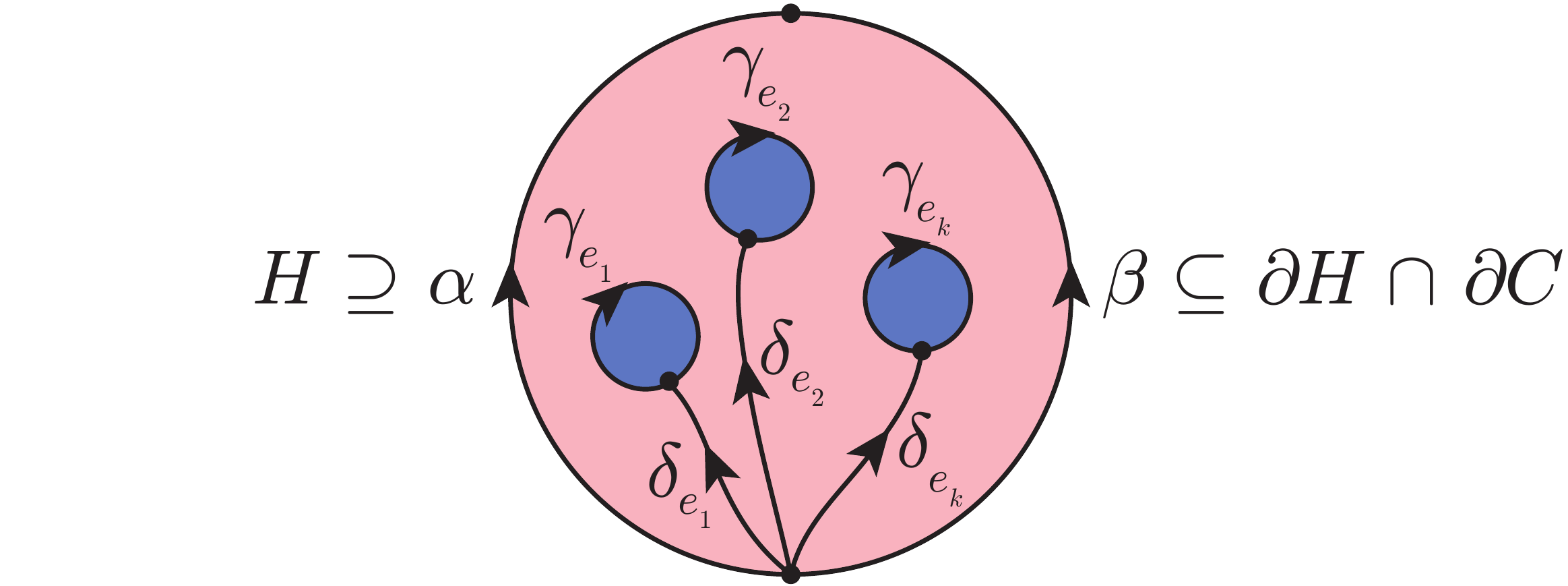}
\caption{The homotopy $h:\bigcirc\rightarrow C$ between $\alpha$ and $\beta$. The parts mapping into $H$ are shaded lightly, and the disks mapping into the neighbourhood of an edge in $C$ shaded more darkly. }
\label{homotopy_disk.pdf}
\end{figure}

We choose a base-point for $\pi_1(H)$ at $\alpha(0).$ Now $\bigcirc\cap h^{-1}(H)$ is a sphere with a number of boundary components, where $\partial \bigcirc = \alpha \beta^{-1}$ and all other boundary components correspond to circles $\gamma_k\co [0,1] \to H,$ which are fibres in the product structure of the edge boundary. For each $\gamma_k$ choose a non self-intersecting path $\delta_k$ on $\bigcirc\cap h^{-1}(H)$ from $\alpha(0)$ to $\gamma_k(0)= \gamma_k(1).$ We can choose these paths to be disjoint apart from at $\alpha(0)$. Then $\bigcirc\cap h^{-1}(H)$ gives the following relation between the elements in $\pi_1(H, \alpha(0))$ represented by these loops:
\begin{equation}\label{eq:disc relator}
[\alpha\beta^{-1}] = \prod [\delta_k\gamma_k \delta_k^{-1}].
\end{equation}
Choose a lift of $\alpha(0)$ to $\til{H}$ and lift $\alpha$ accordingly. Let $\til{\alpha}^*$ be the composition with the quotient $\til{H}\rightarrow\til{H}^*.$ Then $D\til{\alpha}^*:[0,1]\rightarrow  \overline{\mathbb{H}}^3$ is its image under the pseudo-developing map. Define $\til{\beta}$, $\til{\beta}^*$ and $D\til{\beta}^*$ similarly, choosing the lift $\til{\beta}$ to start from the same point as $\til{\alpha}$ does. Since the pseudo-developing map is well-defined and since the endpoints of $e$ are on the same components of $\bdry H \cap \bdry C$ as the respective ends of $\alpha$, $D\til{\alpha}^*(0)$ and $D\til{\alpha}^*(1)$ are distinct points on $\bdry \Hthree$. With our choice of base-points, $\rho([\alpha\beta^{-1}])$ is an isometry of $\overline{\mathbb{H}}^3$ taking $D\til{\alpha}^*(0)$ to $D\til{\alpha}^*(1).$

We now claim that for each term $\delta_k\gamma_k \delta_k^{-1}$ in the right hand side of relation (\ref{eq:disc relator}), we have $\rho([\delta_k\gamma_k \delta_k^{-1}])=1.$ Indeed, $\rho([\delta_k\gamma_k \delta_k^{-1}])$ can be expressed as a product of elementary face pairings. Now the products arising from $\delta_k$ and $\delta_k^{-1}$ are inverses. Hence $\rho([\delta_k\gamma_k \delta_k^{-1}])=1$ if and only if the product corresponding to $\gamma_k$ is trivial. But $\gamma_k$ gives a rotation with eigenvalue the product of all shape parameters around the edge corresponding to that loop. By hypothesis, this product equals $1,$ and hence the claim. 

But then $\rho([\alpha\beta^{-1}])=1,$ contradicting the fact that it acts non-trivially on $\partial \H^3.$ This completes the proof of Theorem~\ref{thm:essential edges}.

For the general case, the above proof can be applied using a punctured sphere in $N_o.$ The curves $\gamma_k$ can only correspond to edges of $N_o$ which have finite degree, and the eigenvalue of the associated rotation is precisely of the form $\xi_e^{o_e}=1,$ giving the same contradiction as above.
\end{proof}

\begin{rmk}
The main restriction in using Yoshida's construction is the fact that ideal simplices in the universal cover are mapped to hyperbolic ideal simplices in hyperbolic 3--space. Daryl Cooper pointed out to us that the proofs in this section could be given using local arguments by subdividing the ideal tetrahedra in $N.$ The authors feel that the approach using triangulations is more appropriate for this volume and the man it is dedicated to.
\end{rmk}



\section{A trip to the zoo}
\label{sec:zoo}

We give three examples of triangulations of $S^3$ with knotted or linked edges, which exhibit interesting features. Special hyperbolic cone-manifold structures on the first triangulation can be found in work by Boileau-Porti \cite{bp} and Hodgson \cite{cdh}, and the two concluding triangulations were provided by Bus Jaco.

\begin{figure}[htb]
\centering
\includegraphics[width=0.9\textwidth]{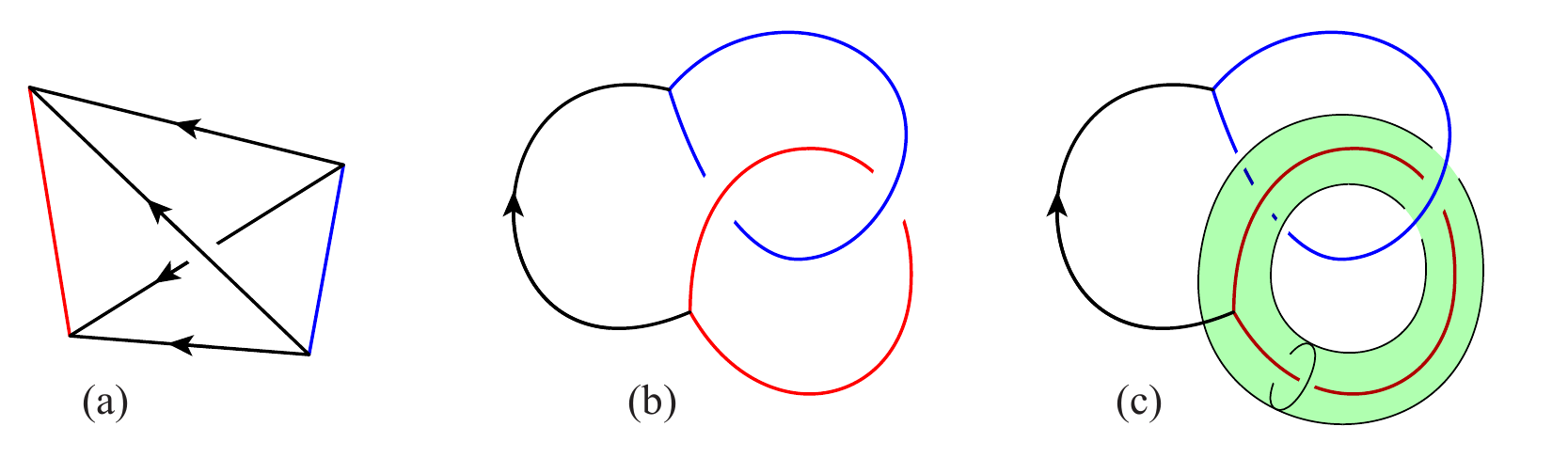}
\caption{Triangulation of $S^3$ with the Hopf link as edges.}
\label{bph_cone_mfld.pdf}
\end{figure}


\subsection{The Hopf link}

Consider the one-tetrahedron three-edges triangulation of $S^3$ shown in Figure~\ref{bph_cone_mfld.pdf}(a). We have $\D(\tri) = \emptyset,$ since there are degree one edges. Giving the parameter $z$ to the degree-one edges $e_0$ and $e_1$, and letting $e_2$ be the degree-four edge, the cone-deformation variety is:
$$\D(\tri; \star) = \{ (Z, \xi) = ((z, z', z''), (z, z, z^{-2})) \mid z \in S^1 \setminus \{1\} \ \} \cong S^1 \setminus \{1\}.$$

There is one ideal point (corresponding to $z\to 1$) and one flat solution (corresponding to $z=-1$). The degeneration $z\to 1$ corresponds to a normal surface which is a Heegaard torus in $S^3,$ and the flat solution will be analysed below using the face pairings.

The remaining structures come in pairs $(z, \overline{z})$ and it suffices to study the case $\Im(z)>0.$ Here, $|z|=1$ implies that the arguments of $z, z', z''$ are the angles of an isoceles triangle. Letting $\alpha$ denote the argument of $z,$ the angle around $e_2$ is $2(\pi-\alpha)$ and the angle around each of $e_0$ and $e_1$ is $\alpha.$ This gives hyperbolic cone-manifold structures, with respective cone angles $(\alpha, \alpha, 2(\pi-\alpha)).$ Since the parameter $z$ at the degree-one edges is an element of $S^1,$ it is easily verified that all of these hyperbolic cone-manifold structures are complete; see Figure~\ref{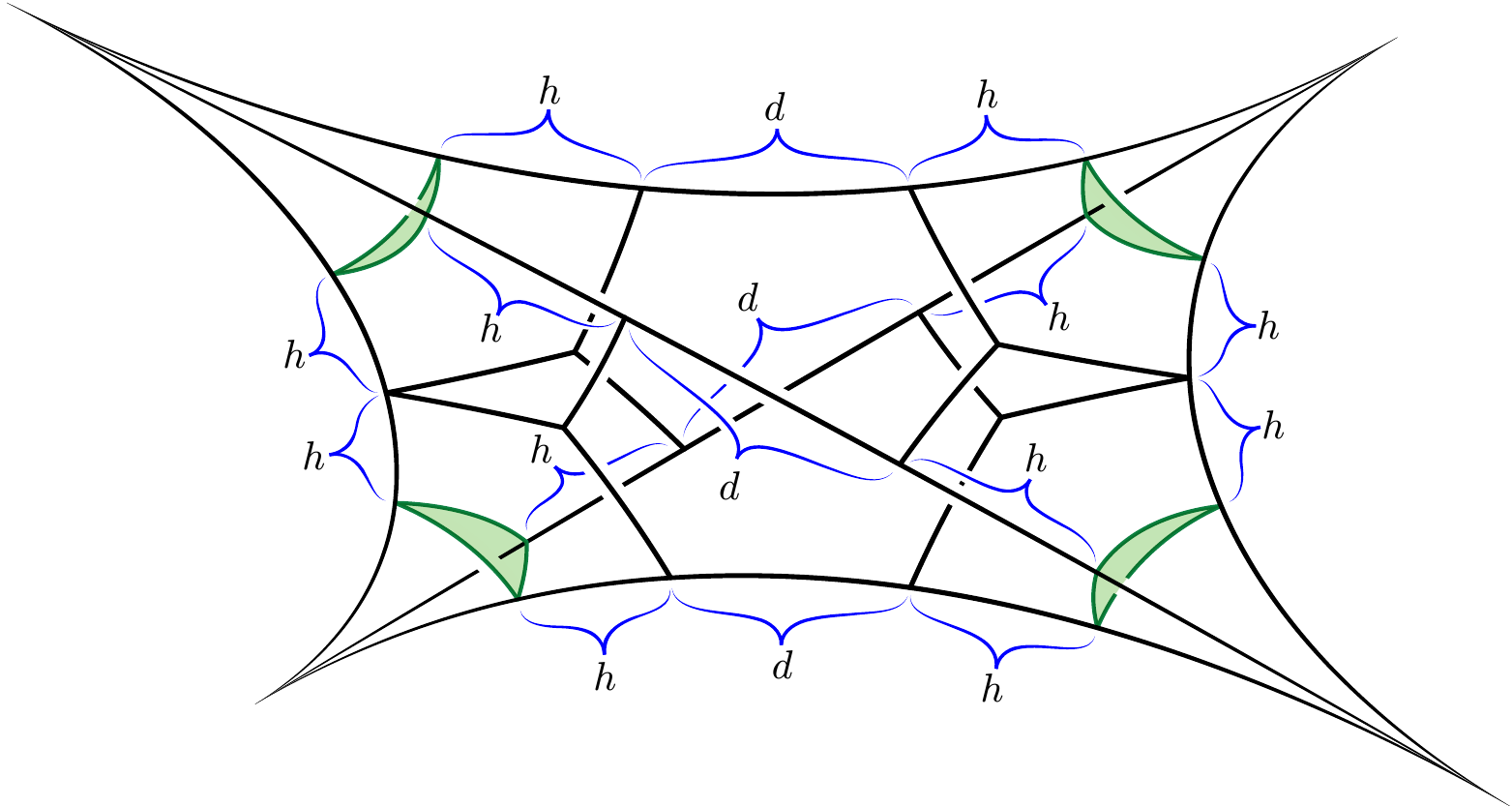}.

\begin{figure}[htb]
\centering
\includegraphics[width=0.9\textwidth]{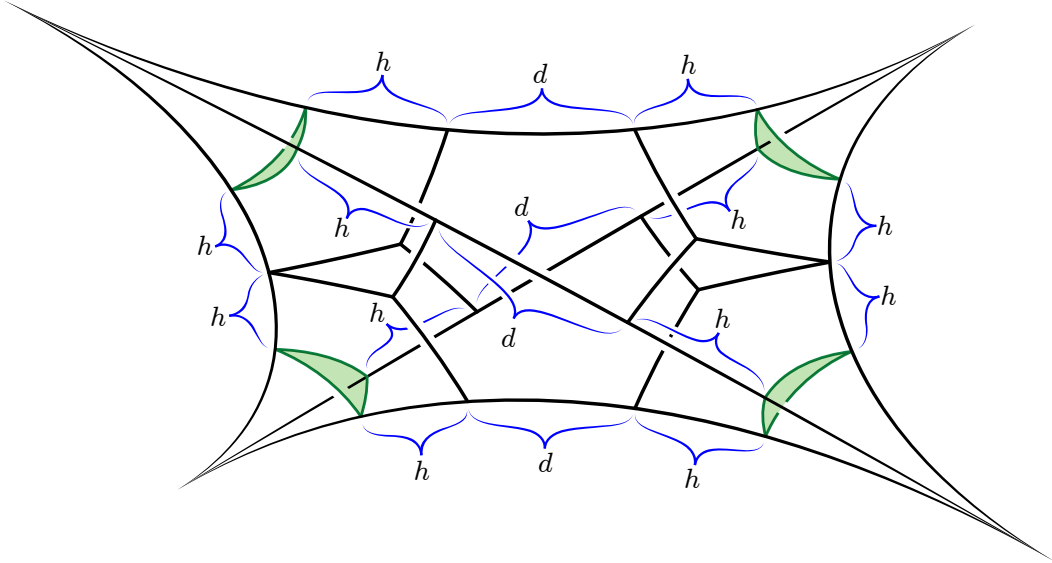}
\caption{Geometry of the hyperbolic cone-manifolds for the Hopf link: The shown horospherical triangles give a cross-section of the cusps. The two face pairings are rotations about the left and right edges respectively.}
\label{tet_spine_horospheres1.pdf}
\end{figure}

In order to analyse the face pairings, we normalise the developing map so that the degree one edges are mapped to the geodesics $[1,\infty]$ and $[0, z''].$ The corresponding pairings for the faces incident with these edges are denoted $\gamma_0$ and $\gamma_1$ respectively. The holonomy around the third edge is denoted $\gamma_2.$ The images of the group elements under the holonomy representation are determined by the following M\"obius transformations:
\begin{align*}
&\rho_z(\gamma_0):[1,\infty, z'']\mapsto[1,\infty,0], \\
&\rho_z(\gamma_1):[0, z'', 1]\mapsto[0,z'',\infty], \\
&\rho_z(\gamma_2):[0, \infty, 1]\mapsto[0,\infty,z^{-2}],
\end{align*}
where $\rho_z(\gamma_2)=\rho_z([\gamma_1, \gamma_0]).$ This gives:
\begin{equation*}
\rho_z(\gamma_0) = \frac{1}{\sqrt{z}} \left( \begin{array}{cc} z & 1-z\\ 0 & 1\end{array} \right),\qquad
\rho_z(\gamma_1) = \frac{1}{\sqrt{z}} \left( \begin{array}{cc} 1 & 0\\ -z & z\end{array} \right),\qquad
\rho_z(\gamma_2) =  \left( \begin{array}{cc} z & 0\\ 0 & z^{-1}\end{array} \right).
\end{equation*}
Letting $z=e^{i\theta},$ we get the traces $(2 \cos(\theta/2), 2 \cos(\theta/2), e^{i\theta}+e^{-i\theta}).$

At $z = -1,$ we have
\begin{equation*}
\rho_{-1}(\gamma_0) = \left( \begin{array}{cc} i & -2i\\ 0 & -i\end{array} \right),\qquad
\rho_{-1}(\gamma_1) = \left( \begin{array}{cc} -i & 0\\ -i & i\end{array} \right),\qquad
\rho_{-1}(\gamma_2) = \left( \begin{array}{cc} -1 & 0\\ 0 & -1\end{array} \right). 
\end{equation*}

Geometrically, as $z\to -1,$ the fundamental domain degenerates to a quadrilateral, and the identification space is a sphere with cone angles $(0,0,\pi)$. By construction, this is a hyperbolic 2--cone-manifold.

As $z \rightarrow 1,$ the limiting representation is infinite cyclic:
\begin{equation*}
\rho_{1}(\gamma_0) = \left( \begin{array}{cc} 1 & 0\\ 0 & 1\end{array} \right),\qquad
\rho_{1}(\gamma_1) = \left( \begin{array}{cc} 1 & 0\\ -1 & 1\end{array} \right).
\end{equation*}
The fact that $\rho_{1}(\gamma_1)$ is the generator of the image corresponds to the chosen normalisation: The edge $[0, z'']$ pops off at infinity (since $z''\to 0$ and $z\to 1$), giving $\rho_{1}(\gamma_0)=1.$


\subsection{The trefoil knot} 

Let $\tri_{3_1}$ denote the minimal layered triangulation of $S^3$ with one degree-one edge $e_1$ and one degree-five edge $e_2,$ as shown in Figure \ref{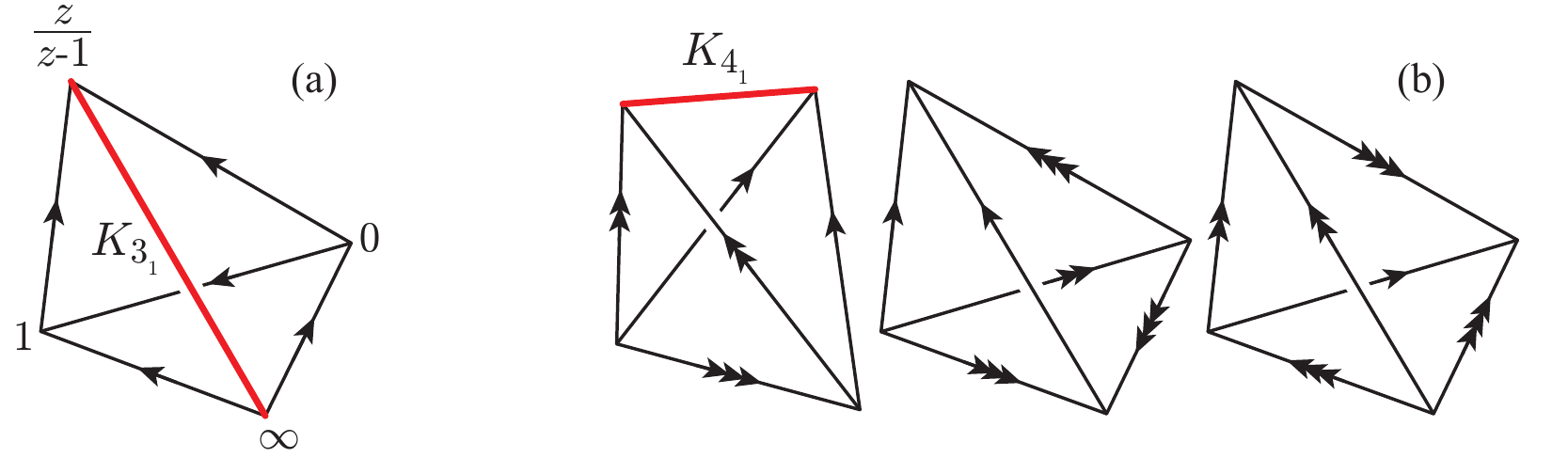}. Here, $e_1$ is the trefoil knot. Again, $\D(\tri_{3_1}) = \emptyset$ and
$$\D(\tri_{3_1}; \star) = \{ (Z, \xi) = ((z, z', z''), (z, z^{-1})) \mid z \in S^1 \setminus \{1\} \ \} \cong S^1 \setminus \{1\},$$
where $z$ is the parameter given to $e_1.$ The angle at $e_1$ is $\alpha$ and the angle at $e_2$ is $2\pi-\alpha.$

\begin{figure}[htb]
\centering
\includegraphics[width=0.9\textwidth]{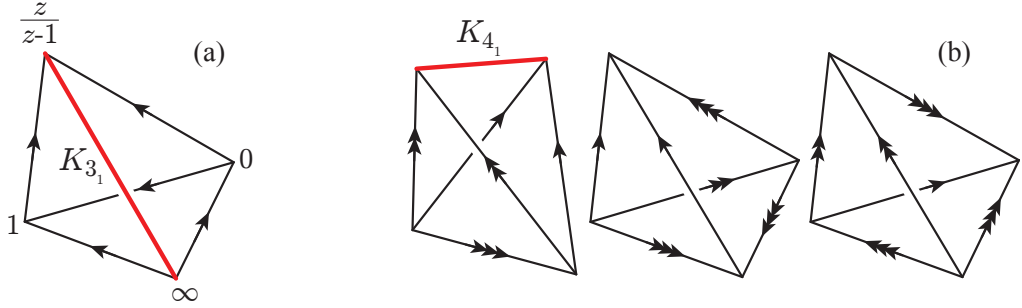}
\caption{Triangulations of $S^3$ with knots as edges. In diagram (a) the edge $K_{3_1}$ is a trefoil. In diagram (b) the edge $K_{4_1}$ is the figure 8 knot.}
\label{knot_triangs_of_S3.pdf}
\end{figure}

As above, there is an ideal point (corresponding to $z\to 1$), and one flat solution (corresponding to $z=-1$). They will be analysed below using the face pairings. The degeneration $z\to 1$ corresponds to a thin edge-linking torus for $e_K,$ and hence to the trefoil knot complement. 

The remaining structures come in pairs $(z, \overline{z})$ and it can again be verified directly that they all give complete hyperbolic cone-manifold structures with singular locus consisting of the three edges; see Figure~\ref{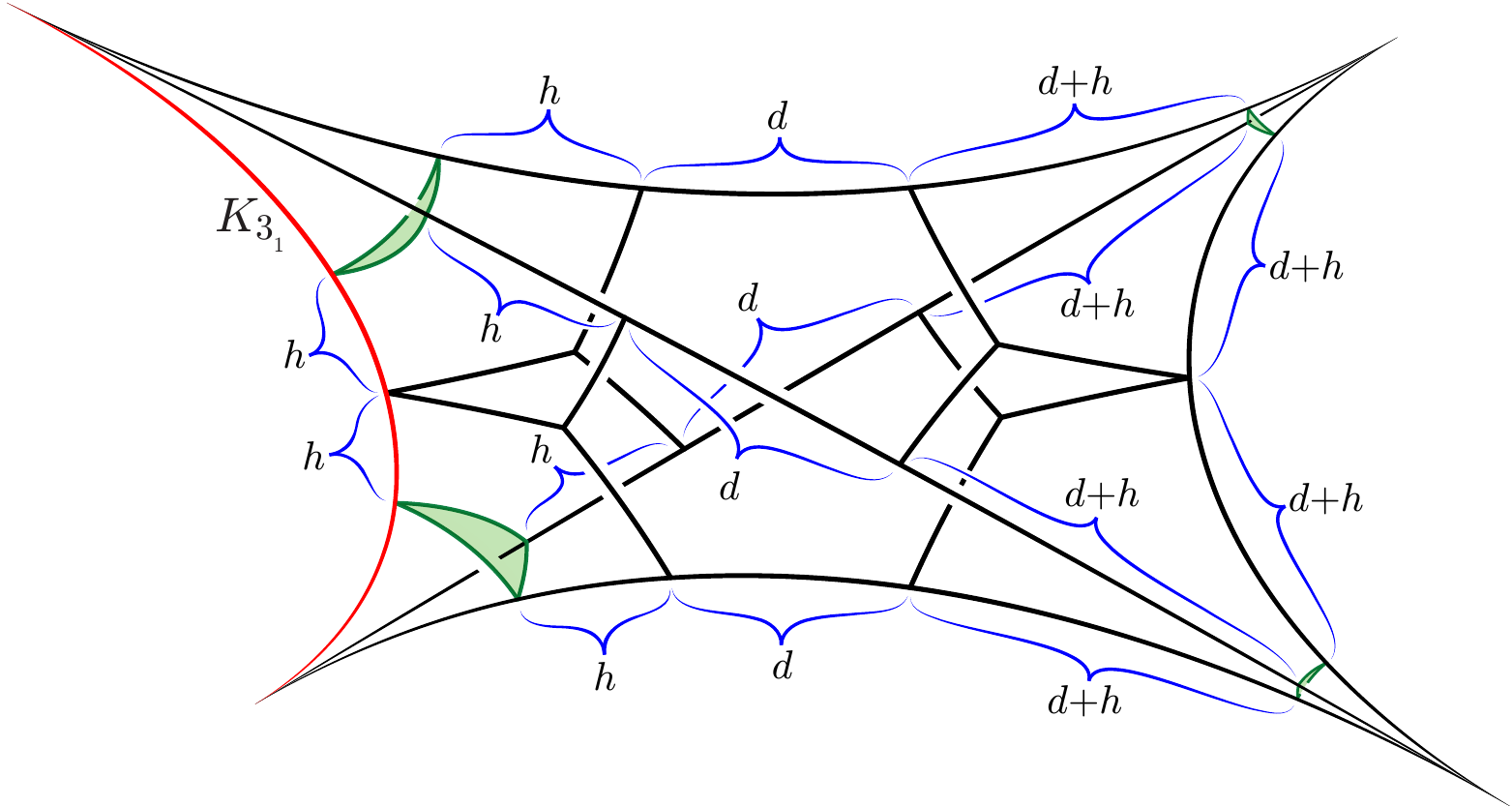}.

\begin{figure}[htb]
\centering
\includegraphics[width=0.9\textwidth]{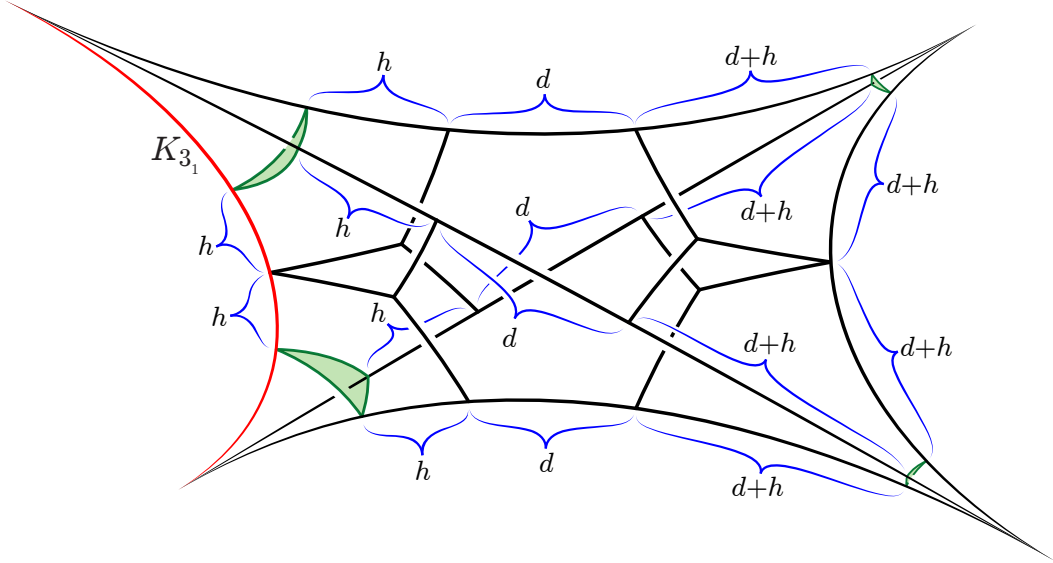}
\caption{Geometry of the hyperbolic cone-manifolds for the trefoil: The shown horospherical triangles give a cross-section of the cusps. One face pairing is a rotation about the left-hand edge; the other identifies the faces incident with the right-hand edge with a twist.}
\label{tet_spine_horospheres2.pdf}
\end{figure}

The fundamental group is again generated by two face pairings, denoted $\gamma_3$ and $\gamma_\infty.$ The holonomy images are:

$$\rho_{z}(\gamma_3):[0,1,\infty]\mapsto[1,\frac{z}{z-1},0], \qquad
\rho_{z}(\gamma_\infty):[\infty, \frac{z}{z-1}, 0]\mapsto[\infty, \frac{z}{z-1},1].
$$

These lead to representations: 

\begin{equation*}
\rho_{z}(\gamma_3) = \frac{1}{\sqrt{z}}\left( \begin{array}{cc} 0 & z\\ -1 & z\end{array} \right),\qquad
\rho_{z}(\gamma_\infty) = \frac{1}{\sqrt{z}} \left( \begin{array}{cc} 1 & z\\ 0 & z\end{array} \right).
\end{equation*}
Their product $\gamma_2 = \gamma_3 \gamma_\infty$ is always of order two:
$$\rho_{z}(\gamma_2) =  \left( \begin{array}{cc} 0 & z\\ -z^{-1} & 0\end{array} \right).$$
At $z\rightarrow 1$, we have: 
\begin{equation*}
\rho_{1}(\gamma_3) = \left( \begin{array}{cc} 0 & 1\\ -1 & 1\end{array} \right),\qquad
\rho_{1}(\gamma_\infty) = \left( \begin{array}{cc} 1 & 1\\ 0 & 1\end{array} \right),\qquad
\rho_{1}(\gamma_2) =  \left( \begin{array}{cc} 0 & 1\\ -1 & 0\end{array} \right).
\end{equation*}

These generate the modular group, and the quotient of $\H^2$ is a sphere with 2, 3 and $\infty$ cone points. See Figure \ref{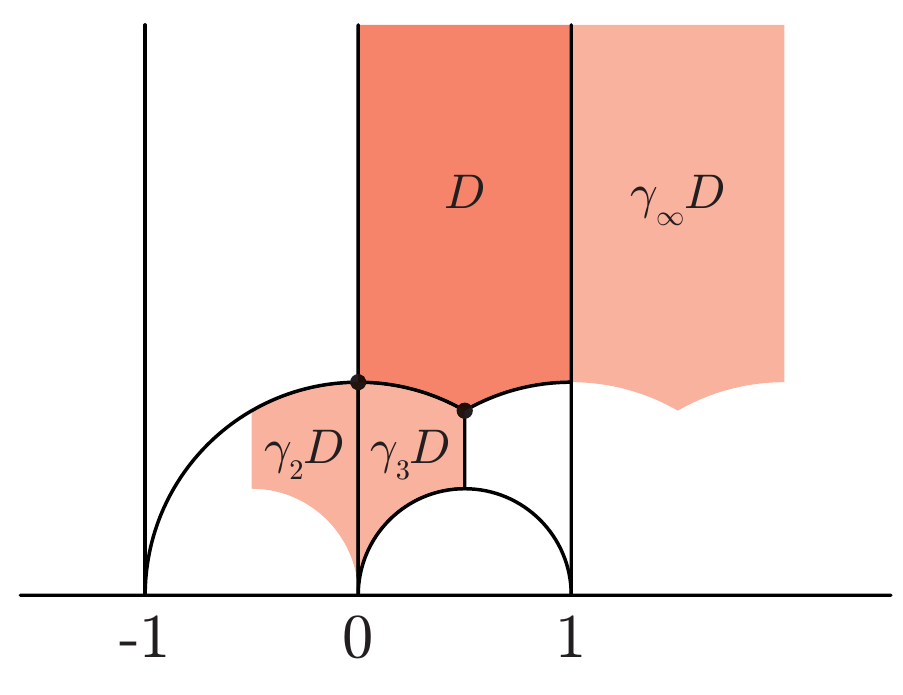}. The flat solution does not appear to have a nice interpretation.

 \begin{figure}[htb]
\centering
\includegraphics[width=0.5\textwidth]{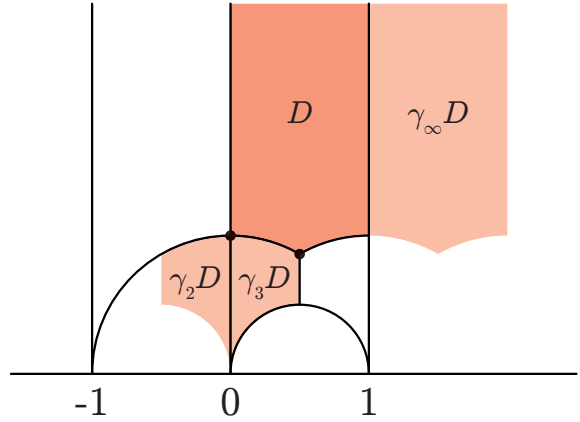}
\caption{A fundamental domain of the action of $\rho_1$ for the trefoil on $\H^2$ is marked $D$.}
\label{trefoil_collapse.pdf}
\end{figure}


\subsection{The figure eight knot}

In the previous examples, we have analysed structures arising at ordinary points of the cone-deformation variety as well as a single ideal point which corresponding to a 1--dimensional or 2--dimensional collapse. In the last example, we exhibit a surface of ideal points parameterising 3--dimensional structures. See Figure \ref{knot_triangs_of_S3.pdf}(b).

Label the tetrahedra $z_0,z_1$ and $z_2$ at their upper and lower edges in Figure~\ref{knot_triangs_of_S3.pdf}. Then the generalised gluing equations are: 
\begin{eqnarray}
z_0 & = & \xi_K\\
\left(\frac{1}{z_0} \right)\left( \frac{1}{z_1} \right)\left( \frac{z_2-1}{z_2}\right) & = & \xi_1\\
\left(\frac{1}{z_0} \right)\left( \frac{z_1-1}{z_1} \right)\left( \frac{1}{z_2}\right) & = & \xi_2\\
z_0 \left( \frac{z_1^2}{1-z_1} \right)\left( \frac{z_2^2}{1-z_2} \right) & = & \xi_3
\end{eqnarray}
These simplify to:
\begin{eqnarray}
z_0 & = & \xi_K\\
\xi_3 & = & \xi_K^{-1} \xi_1^{-1}  \xi_2^{-1}  \\
z_1 z_2 \xi_2 \xi_K  & = &  (z_1 -1)\\
z_1 z_2 \xi_1 \xi_K  & = &  (z_2 -1)
\end{eqnarray}
Hence $\D(\tri_{4_1}; \star)$ is parameterised by $(\xi_K, z_1, z_2),$ where $\xi_K \in S^1$ and $z_1, z_2 \in \C\setminus\{0,1\},$ subject to $\frac{z_2''}{z_1}\in S^1$ from (12) and $\frac{z_1''}{z_2} \in S^1$ from (11) (note that $z_i''=\frac{z_i-1}{z_i}).$

Putting $z_1 = e^{-i\theta}z_2'',$ where $\theta \in [0,2\pi)$, the second condition gives $z_2'e^{i\theta}-z_2''z_2' \in S^1$. Letting $z_2''=x+iy$ for $x,y\in\RR$, this yields a single equation in $x, y, \theta,$ and the space of solutions is a surface in $\RR^2 \times S^1$ as shown in Figure \ref{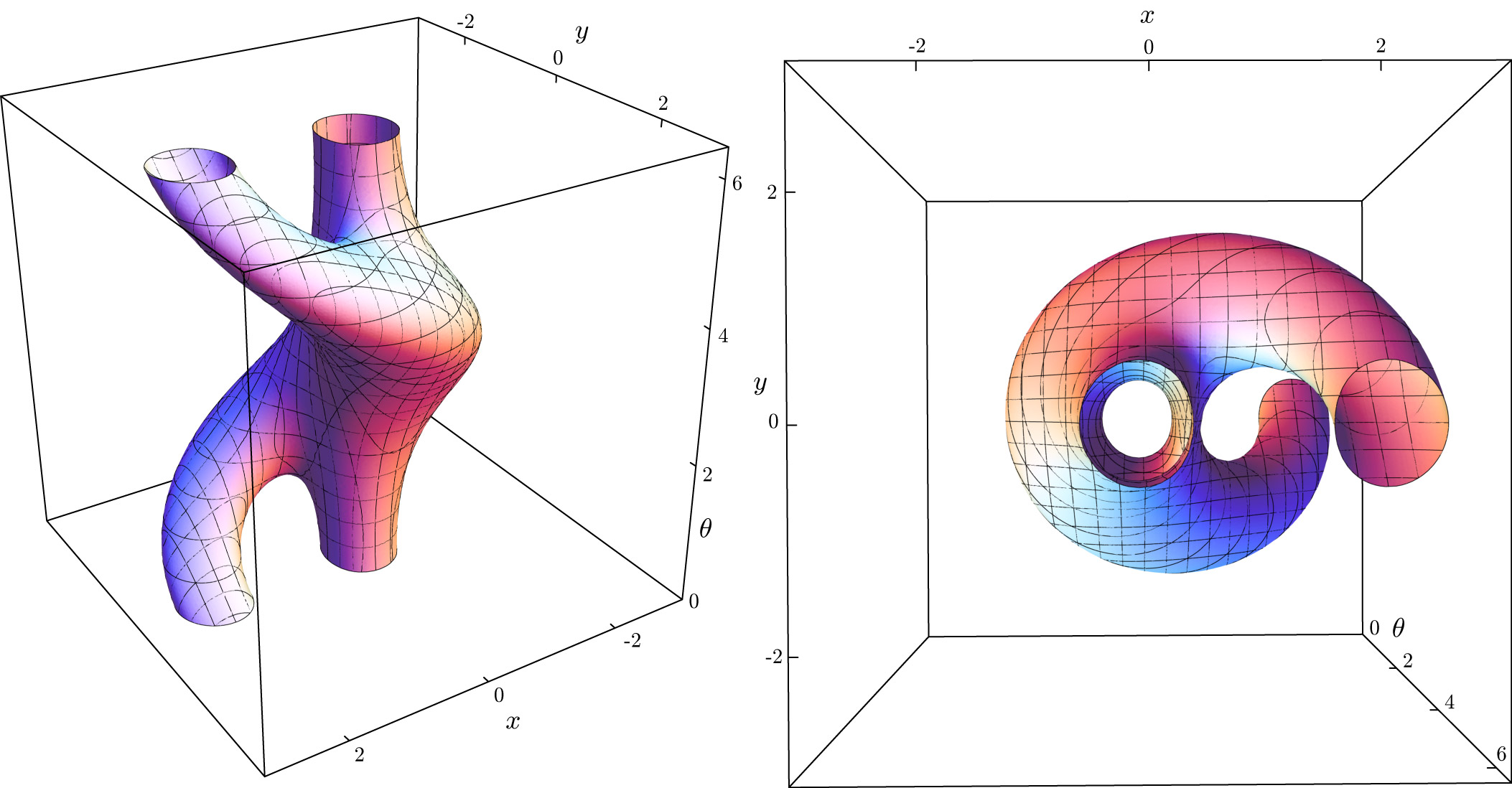}. Now $\D(\tri_{4_1}; \star)$ is parameterised by $(\xi_K, x, y, \theta),$ and since $\xi_K \in S^1$ is arbitrary, $\D(\tri; \star)$ is the product of this surface with $S^1$.

 \begin{figure}[htb]
\centering
\includegraphics[width=0.8\textwidth]{fig8_edge_in_s3_real_param_space2.jpg}
\caption{$\D(\tri_{4_1}; \star)$ is the product of this surface in $\RR^2 \times S^1$ with $S^1$.}
\label{fig8_edge_in_s3_real_param_space2.jpg}
\end{figure}

The only ideal points are at $\xi_K = 1$, parameterised by $(1, x, y, \theta),$ and the associated normal surface is the thin edge linking torus around $K$, which splits the manifold into a solid torus and the figure 8 knot complement. Crushing along this surface identifies the single arrow edge with the double arrow edge, and gives us the canonical triangulation of the figure 8 knot complement. The equations become:

\begin{eqnarray}
z_0 & = & 1\\
 \frac{z_2-1}{z_1 z_2} & = & \xi_1\\
 \frac{z_1-1}{z_1z_2}  & = & \xi_2\\
\frac{z_1^2z_2^2}{(1-z_1)(1-z_2)} & = & \xi_3
\end{eqnarray}

To get the complete structure on the figure eight knot complement, we need $\xi_1=1=\xi_2$, which implies that $z_1=z_2''$ and $z_2=z_1''$. Then $z_1 = e^{-i\theta}z_2'' = e^{-i\theta}z_1$, so $\theta=0$, and we get $z_1 = z_2''$. We also have $z_2=z_1''$, which implies that $z_2''=z_1'$, so $z_1=z_1'$, and $z_1$ is the shape parameter of the regular hyperbolic ideal tetrahedron. Similarly for $z_2$. Note that solutions in a neighbourhood of this solution give 3--dimensional hyperbolic cone-manifold structures on the figure eight knot complement with singular locus contained in the ideal edges.



\address{Henry Segerman,\\ Department of Mathematics and Statistics,\\ The University of Melbourne\\ VIC 3010, Australia\\
(segerman@unimelb.edu.au)\\--}

\address{Stephan Tillmann,\\ School of Mathematics and Physics,\\ The University of Queensland,\\ Brisbane, QLD 4072, Australia\\
(tillmann@maths.uq.edu.au)}

\Addresses


\end{document}